\newcommand{\1}{\mathbf{1}}
\newcommand{\TT}{{\mathcal T}  }
\newcommand{\pp}{ {\partial} }
\newcommand{\RR}{{{\mathbb R}}}
\newcommand{\R} {\mathbb R}
\newcommand{\cuad}{{\sqcap\kern-.68em\sqcup}}
\newcommand{\foral}{\quad\mbox{for all}\quad}
\newcommand{\be}{\begin{equation}}
\newcommand{\ee}{\end{equation}}
\newcommand{\la}{\lambda}
\newtheorem{lemma}{Lemma}[section]
\newtheorem{proposition}{Proposition}[section]
\newtheorem{theorem}{Theorem}[section]
\newtheorem{remark}{Remark}[section]
\newcommand{\bremark}{\begin{remark} \em}
	\newcommand{\eremark}{\end{remark} }
\numberwithin{equation}{section}
\begin{document}
\title[Global solutions for the Yang-Mills heat flow]{Global oscillatory  solutions for the Yang-Mills heat flow}

\author[Y. Sire]{Yannick Sire}
\address{\noindent Department of Mathematics, Johns Hopkins University, 404 Krieger Hall, 3400 N. Charles Street, Baltimore, MD 21218, USA}
\email{ysire1@jhu.edu}
\author[J. Wei]{Juncheng Wei}
\address{\noindent
Department of Mathematics,
Chinese University of Hong Kong,
Shatin, NT, Hong Kong}
\email{wei@math.cuhk.edu.hk}

\author[Y. Zheng]{Youquan Zheng}
\address{\noindent School of Mathematics, Tianjin University, Tianjin 300072, P. R. China}
\email{zhengyq@tju.edu.cn}

\author[Y. Zhou]{Yifu Zhou}
\address{\noindent
School of Mathematics and Statistics, Wuhan University, Wuhan 430072, China}
\email{yifuzhou@whu.edu.cn}

\maketitle

\begin{abstract}
We investigate the long-time dynamics for the global solution of the $SO(4)$-equivariant Yang-Mills heat flow (YMHF) with structure group $SU(2)$ in space dimension $4$. For a class of initial data with specific decay at spatial infinity, we prove that the long-time dynamics of YMHF can be described by the initial data in a unified manner. As a consequence, the global solutions can exhibit blow-up, blow-down, and more exotically, {\it oscillatory} asymptotic behavior at time infinity. This seems to be the first example of Yang-Mills heat flows with oscillatory behavior as $t\to \infty$.  
\end{abstract}

\bigskip

\section{Introduction}
{\bf The Yang-Mills equation and its heat flow}. We investigate the long time behaviour of an equivariant version of the Yang-Mills heat flow in four dimensions and exhibit a wide range of phenomena depending on the initial condition. 

Let $E\to M$ be a vector bundle over a four dimensional Riemannian manifold without boundary, with its  structure group $G$ being a compact Lie group.  A connection $A$ on $E$ can be specified as a covariant derivative $D_A$ from $C^\infty(E)$ to $C^\infty (E\otimes \Omega^1(M))$. In a local trivialization of the vector bundle $E$, the covariant derivative $D_A$ can be expressed as 
$$
D:=D_A=d+A_\alpha
$$
where $A=(A_\alpha)_{\alpha}$ is a section of $T^*M \otimes \mathfrak{g}$, and $\mathfrak{g}$ is the Lie algebra of $G$, i.e. the  connection $A$ is a $\mathfrak{g}-$valued $1-$form.  The curvature $F_A$ of the connection $A$ is given by the tensor $D_A^2: \Omega^0(E)\to \Omega^2(E)$, which can be formally  written as
$$
F_A:=dA+A\wedge A.
$$
For a connection $A$, the Yang-Mills functional is defined by
$$
YM(A) = \frac{1}{2}\int_{M}|F_A|^2 dx.
$$
It is well known that the Euler-Lagrange equation of $YM(A)$ is
\begin{equation}\label{YMstat}
D^*_AF_A=0
\end{equation}
where $D^*_A$ denotes the adjoint operator of $D_A$ with respect to the Killing form of $G$ and the metric on $M$. A connection $A$ is called Yang-Mills if and only if it is a critical point of $YM$, which is equivalent to the equation
$$
D^*_AF_A = 0.
$$
There is a substantial literature related to solutions of this equation and we do not attempt to be exhaustive but just mention few classical references such as the book by Donaldson and Kronheimer \cite{DonaldsonKronheimer} or the one by Freed and Uhlenbeck \cite{Freedbook}. The importance of gauge theory and Equation \eqref{YMstat} in topology and geometry has been emphasized in e.g. \cite{DonaldsonKronheimer,TianGangGTCG,UY} (without being exhaustive at all). An important aspect of the solutions to Yang-Mills equations is their regularity and the structure of their possible singularities (see e.g. the important works \cite{MeyerRiviereRMI}, \cite{NaberValtortaInvenMath}, \cite{Tao-TianJAMS}, \cite{TianGangGTCG}, \cite{UhlenbeckCMP-I}, \cite{UhlenbeckCMP-II}). 

To construct Yang-Mills connections on any given bundle $E$, a natural approach is to deform a given connection along the negative gradient flow of $YM$ which can be described by the evolution equation
\begin{equation}\label{e:Yangmillsheatflow}
\frac{\partial A}{\partial t} = -D_A^*F_A,
\end{equation}
starting from any initial connection $A_0$. As for its static version there has been an important amount of work to understand regularity {\sl vs} singularity formation, and the long time behaviour of suitable solutions. See e.g. \cite{feehan,WaldronInventionMath,SchlatterStruweAJM,StruweCVPDE,Schlatter2,SchlatterCrelle,naito} without being exhaustive as well. 

{\bf The BPST/ADHM instantons}. Let us recall the well known BPST/ADHM instantons for the Yang-Mills equation with structure group $SU(2)$, see \cite{AHDM}.
Identify the quaternion
$$
x = x_1 + x_2i + x_3 j + x_4k\in \mathbb H
$$
with elements of $\mathbb{R}^4$. $i$, $j$, $k$ satisfy the following algebraic property
$$
i^2 = j^2 = k^2 = -1,\quad ij = k = -ji,\quad jk = i = -kj,\quad ki = j = -ik,
$$
and
$$
\bar{x} = x_1 -x_2 i - x_3 j - x_4 k, \quad |x|^2 = x_1^2 + x_2^2 + x_3^2 + x_4^2 = x\cdot\bar{x}, \quad Im x = x_2i + x_3 j + x_4k
$$
hold. It is well known that there is an isomorphism between the Lie algebra $su(2)$ of structure group $SU(2)$ and $Im \mathbb H$. Assume that $B(x) = Im(f(x, \bar{x})d\bar{x})$, it was pointed out by Polyakov that, when $f(x, \bar{x}) = \frac{x}{1+|x|^2}$, it is a nontrivial self-dual instanton on the principal bundle $E = \mathbb{R}^4\times SU(2)$. In this case, one has
$$
B = Im\left(\frac{x}{1+|x|^2}d\bar{x}\right), \quad F_B = \frac{dx\wedge d\bar{x}}{(1+|x|^2)^2}.
$$
Note that
\begin{equation*}
\begin{aligned}
dx\wedge d\bar{x} &= \left(dx_1 + i dx_2 + j dx_3 + kdx_4\right)\wedge \left(dx_1 - i dx_2 - j dx_3 - kdx_4\right)\\
& = -2\left[i(dx_1\wedge dx_2 + dx_3\wedge d_4) + j(dx_1\wedge dx_3 + dx_4\wedge d_2)\right. \\
&\quad \left. + k(dx_1\wedge dx_4 + dx_2\wedge d_3)\right]
\end{aligned}
\end{equation*}
form a basis for a self-dual 2-form. See also \cite{AHDM}, \cite{BPST} and \cite{Donaldson} for more explicit constructing methods for Yang-Mills instantons.

{\bf The $SO(4)$-equivariant case}. We restrict ourselves to $SO(4)$-equivariant solutions of (\ref{e:Yangmillsheatflow}) with structure group $SU(2)$, which means that assuming $A(x, t) = Im(\frac{x}{2r^2}\psi(r, t)d\bar{x})$, $r = |x|$, then the Yang-Mills heat flow takes the following form,
\begin{equation}\label{YMSO4}
\psi_t= \psi_{rr} + \frac{1}{r}\psi_r -\frac{2}{r^2}(\psi-1)(\psi-2)\psi,
\end{equation}
see, for example, \cite{Grotowski-Shatah} and \cite{SchlatterStruweAJM}.
This equation is the formal negative $L^2$-gradient flow associated to the reduced energy functional
$$
F[\psi]=\int_0^\infty \left[\psi_r^2+\frac{4}{r^2}\psi^2\left(1-\frac{\psi}{2}\right)^2\right] r dr
$$
and 
possesses a one-parameter family of finite energy, static solutions, namely
\begin{equation*}
\frac{2r^2}{r^2+\lambda^2},\quad \lambda\in\mathbb{R}_+.
\end{equation*}
If we make the transformation $\phi = r^{-2}\psi$, then (\ref{YMSO4}) becomes the following heat equation in dimension 6 under radial symmetry
\begin{equation}\label{YMSO4-6Dheat}
u_t = u_{rr} + \frac{5}{r}u_r +(6-2r^2 u)u^2.
\end{equation}

{\bf Long time behavior of Yang-Mills heat flow.}  Struwe \cite{StruweCVPDE} (see also \cite{Schlatter2}) established global existence and uniqueness for the Yang-Mills heat flow in a vector bundle over a compact Riemannian four dimensional manifold for given initial connection of finite energy. It was proved in \cite{SchlatterStruweAJM} that the Yang-Mills heat flow of $SO(4)$-equivariant connection on an $SU(2)$-bundle over a ball in $\mathbb{R}^4$ admits a smooth solution for all times and  for any initial and boundary data of finite energy. In \cite{Hong-TianCAG}, the global existence of smooth solutions to the $m$-equivariant Yang-Mills flow on $\mathbb{R}^4$ as well as the Yang-Mills-Higgs flow on a complete three-manifold were obtained. In \cite{WaldronInventionMath}, the author proved that finite-time singularities do not occur in Yang-Mills flow on a compact four-manifold, confirming the conjecture of \cite{SchlatterStruweAJM}. In \cite{SWZ-YMHF}, the authors constructed some blow-up solutions at infinity for the 4D Yang-Mills. This result suggests that the asymptotic behaviour of Yang-Mills flow can be pretty intricate.   

{\bf Results: global oscillatory, unbounded and decaying solutions to the Yang-Mills heat flow}. In this paper, we prove that global solutions for the Yang-Mills heat flow can exhibit growing, decaying and more exotically, {\it oscillating} long-time behaviors.
\begin{theorem}\label{thm}
Let $\Theta(r,t)$ solve 
\begin{equation}
    \left\{
\begin{aligned}
    &\Theta_t=\Theta_{rr}+\frac5r \Theta_r, ~&(r,t)\in \R_+\times (t_0,\infty),\\
    &\Theta(r,t_0)=\Theta_0(r),~& r\in \R_+.
\end{aligned}
\right.
\end{equation}
Then for any smooth initial data $\Theta_0(r)$ satisfying 
$$
|\Theta_0(r)|\sim r^{-2}(\log r)^{-a} ~\mbox{ as }~r\to+\infty,\quad 0< a <1,
$$
problem (\ref{YMSO4-6Dheat}) has a solution with the following form
$$
u(r,t)= \la^{-2}(t)U\left(\frac{r}{\la(t)}\right)+\Theta(r,t)+\Psi_*(r,t),
$$
where $U(\rho) = \frac{2}{\rho^2+1}$, and $\|\Psi_*(\cdot,t)\|_{L^\infty}=O(t^{-1}(\log t)^{-a-\epsilon})$ for some $0<\epsilon<1$. 
The global scaling law of $\lambda$ is described precisely by
\begin{equation}\label{law}
   \log\la(t) =  \left(-\frac{3}{2}+O\left(\frac{1}{\log t}\right)\right)\int_{\sqrt{t_0}}^{\sqrt{t}} s\Theta_0(s)ds + C(t_0, t).
\end{equation}   
Here $C(t_0, t)$ is a smooth bounded function of $t\in (t_0, \infty)$.
\end{theorem}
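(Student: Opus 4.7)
The approach I would take is inner--outer gluing combined with modulation. I look for a solution of \eqref{YMSO4-6Dheat} of the form
\be
u(r,t)=U_{\la(t)}(r)+\Theta(r,t)+\Psi_*(r,t),\qquad U_\la(r):=\la^{-2}U(r/\la),
\ee
in which the concentrated profile $U_\la$ carries the instanton at scale $\la(t)$, the linear heat-evolved field $\Theta$ absorbs the slowly decaying tail prescribed by the data, and $\Psi_*$ is a small remainder to be constructed. Since $U$ is a static solution of the $6$-D nonlinear equation while $\Theta$ satisfies the $6$-D linear heat equation, substitution forces
\be
\pp_t\Psi_*=L_\la\Psi_*+\EE[\la,\Theta]+\NN(\Psi_*,\Theta),
\ee
with linearisation $L_\la\phi:=\phi_{rr}+\tfrac{5}{r}\phi_r+(12U_\la-6r^2U_\la^2)\phi$, error
$\EE[\la,\Theta]=-\dot\la\,\pp_\la U_\la+(12U_\la-6r^2U_\la^2)\Theta$
combining the modulation adjustment with the leading soliton--tail interaction, and a polynomial nonlinearity $\NN$ of degree $\ge 2$ in $(\Psi_*,\Theta)$.

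\textbf{Modulation law.}
The scaling invariance of $U$ produces the zero mode $Z_\la:=\pp_\la U_\la$ of $L_\la$, localised at $r\sim\la$. Solvability demands $\EE[\la,\Theta]\perp Z_\la$ in the natural $r^5\,dr$ measure. Rescaling to $\rho=r/\la$ and using that $Z(\rho)=-(2U+\rho U')=\OO(\rho^{-4})$ (so $\int Z^2\rho^5\,d\rho<\infty$) and that $\Theta(\la\rho,t)=\Theta(0,t)+\OO(\la^2\rho^2)$ whenever $\la(t)\ll\sqrt t$, the orthogonality collapses to
\be
\frac{\dot\la}{\la}=-\tfrac{3}{2}\,\Theta(0,t)+o\bigl(\Theta(0,t)\bigr),
\ee
with the constant $-3/2$ computed explicitly as the ratio of $\int(12U-6\rho^2 U^2)Z\rho^5\,d\rho$ and $\int Z^2\rho^5\,d\rho$ against $U(\rho)=2/(\rho^2+1)$. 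Under the hypothesis $|\Theta_0(r)|\sim r^{-2}(\log r)^{-a}$, a direct Gaussian computation for the $6$-D heat kernel with the substitution $r=\sqrt s\,\sigma$ gives
\be
\int_{t_0}^{t}\Theta(0,s)\,ds=\bigl(1+\OO(1/\log t)\bigr)\int_{\sqrt{t_0}}^{\sqrt t}s\,\Theta_0(s)\,ds,
\ee
and integrating the modulation ODE between $t_0$ and $t$ produces precisely \eqref{law}.

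\textbf{Construction of $\Psi_*$.}
To realise $\Psi_*$ I would decompose $\Psi_*=\eta_\delta(r/\la)\phi(r/\la,t)+\psi(r,t)$, with $\eta_\delta$ a smooth cutoff at the soliton scale. The inner unknown $\phi$ solves, in self-similar variables, a parabolic equation driven by the $Z$-orthogonal part of $\EE$, and is constructed in a weighted $L^\infty$ topology adapted to the spectral structure of the frozen operator $L_\la|_{\la=1}$ on the orthogonal complement of its kernel. The outer unknown $\psi$ satisfies a perturbed $6$-D heat equation with source of size $\OO(t^{-1}(\log t)^{-a})$ supported at scales up to $\sqrt t$, inverted in a weighted space of functions decaying like $t^{-1}(\log t)^{-a-\ve}$. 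A Banach fixed-point argument on the triple $(\la,\phi,\psi)$, with $\la$ slaved to $(\phi,\psi)$ through the modulation ODE and the orthogonality condition imposed on each iterate, then closes the construction.

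\textbf{Main obstacle.}
The genuine difficulty is the \emph{marginal} decay of the driving field: $\Theta(0,t)\sim t^{-1}(\log t)^{-a}$ is only borderline integrable in time, so algebraic Gronwall-type estimates are unavailable and every step must track sharp powers of $\log t$. In particular the improved rate $t^{-1}(\log t)^{-a-\ve}$ for $\Psi_*$ must be self-consistent through the iteration, which forces a delicate analysis in the matching region $\la(t)\ll r\ll\sqrt t$, where the Coulomb-like tail $U_\la\sim 2/r^2$ of the soliton and the field $\Theta$ are of comparable size and must be balanced exactly to avoid reintroducing $\OO(1)$ errors into the modulation. The prescribed range $0<a<1$ is precisely what makes this possible while simultaneously ensuring that the integral in \eqref{law} diverges, so that the unbounded/decaying/oscillating trichotomy for $\la(t)$ is genuinely realised by choosing the sign profile of $\Theta_0$.
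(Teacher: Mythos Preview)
Your plan is the paper's: inner--outer gluing around the modulated instanton, the scaling law read off from orthogonality to the dilation mode, and a fixed-point argument in weighted $L^\infty$ spaces (the paper solves outer, then $\la$, then inner by Schauder rather than a single Banach iteration, and derives \eqref{law} by inserting the radial heat kernel for $\Theta$ directly into the orthogonality integral rather than Taylor-expanding at $\rho=0$, but this is organisational).

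One correction worth making: your two intermediate constants are each wrong and happen to cancel. With $Z(\rho)=(1+\rho^2)^{-2}$ one has $12U-6\rho^2U^2=24(1+\rho^2)^{-2}$, so $\int_0^\infty(12U-6\rho^2U^2)Z\rho^5\,d\rho=4$ while $\int_0^\infty Z^2\rho^5\,d\rho=\tfrac16$; since the modulation term enters the inner equation as $4\la^{-1}\dot\la\,Z$, the orthogonality actually gives $\dot\la/\la=-6\,\Theta(0,t)$, not $-\tfrac32\,\Theta(0,t)$. Conversely, from $\Theta(0,t)=\tfrac{1}{64t^3}\int_0^\infty s^5e^{-s^2/4t}\Theta_0(s)\,ds$ and $\int_0^\infty ue^{-u}\,du=1$ one finds $\int_{t_0}^t\Theta(0,s)\,ds=\bigl(\tfrac14+o(1)\bigr)\int_{\sqrt{t_0}}^{\sqrt t}s\,\Theta_0(s)\,ds$, not with leading coefficient $1$. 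The product $-6\cdot\tfrac14=-\tfrac32$ recovers \eqref{law}, so the outline stands, but the stated ``ratio'' computation should be fixed.
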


\begin{remark}
Theorem \ref{thm} immediately yields the existence of the following three regimes: blow-up, blow-down, and exotic oscillation.
\begin{enumerate}
\item If the initial data $\Theta_0$ is positive and $\Theta_0(r)\sim\langle r\rangle^{-2}(\log\langle r\rangle)^{-a}$, then there exists positive global unbounded solution with $$\|u(\cdot,t)\|_{L^\infty}\sim e^{c(\log t)^{1-a}} ~\mbox{ as }~t\to+\infty$$
for some $c>0$.

\item If the initial data $\Theta_0$ is negative and $\Theta_0(r)\sim-\langle r\rangle^{-2}(\log\langle r\rangle)^{-a}$, then there exists a sign-changing blow-down solution with $$\|u(\cdot,t)\|_{L^\infty}\sim e^{-c(\log t)^{1-a}} ~\mbox{ as }~t\to +\infty$$  for some $c>0$.

\item 
There exists a sign-changing smooth function $\Theta_0$ with $$|\Theta_0(r)|\sim r^{-2}\log^{-a} r ~\mbox{ as }~r\to+\infty,\quad 0<a<1$$
such that
$$ 
\liminf_{t\to +\infty}\lambda(t) = 0\quad \text{and}\quad \limsup_{t\to +\infty}\lambda(t) = +\infty.
$$
A particular choice can be 
\begin{align*}
\Theta_0(r) = \frac{(1-a)\cos(\log\log(2+r^2))-\sin(\log\log(2+r^2))}{(2+r^2)\left[\log(2+r^2)\right]^a},\quad 0<a<1.
\end{align*}
\end{enumerate}
\end{remark}

\begin{remark}
    A similar construction as in the proof of Theorem \ref{thm} (and also \cite{weizhouEPE}) implies that there exists initial data $\Theta_0(r)$ that decays faster than the soliton as 
$$
|\Theta_0(r)|\sim r^{-2-\epsilon} ~\mbox{ as }~r\to \infty
$$
for some $\epsilon>0$, such that the global solution of \eqref{YMSO4-6Dheat} remains bounded 
$$
\|u(\cdot,t)\|_{L^\infty}\sim 1 ~\mbox{ as }~t\to \infty.
$$
\end{remark}

As far as the long time behavior is concerned, the deep relation between degree 2 harmonic map heat flow, i.e.
$$
u_t=u_{rr}+\frac1r u_r-\frac{4\sin u\cos u}{r^2}, \quad (r,t)\in \R_+\times\R_+,
$$
and the four-dimensional  Yang-Mills heat flow has long been known (see, for example, \cite{Grotowski-Shatah}).
In this regard, Theorem \ref{thm} might be viewed as near-soliton global dynamics analogous to the result by Gustafson-Nakanishi-Tsai \cite{GNT10CMP}, where they investigated exotic asymptotic behaviors of 2-equivariant harmonic map heat flow and classified the global scaling law as
$$\log\la(t)\sim \frac{2}{\pi}\int_1^{\sqrt t}\frac{v_1(s)}{s}ds,
$$
only depending on the first entry of the initial map of the equivariant flow. The current study is motivated by the work \cite{GNT10CMP}, while the method employed here, in the spirit of \cite{CortazarDelPinoMussoJEMS,DavilaDelPinoWei2020}, is completely different. It involves gluing two well matched inner and outer pieces and finding desired solutions in refined weighted spaces, requiring the development of delicate linear theory in the $L^\infty$-framework. The global scaling law \eqref{law} is obtained by a careful treatment of the orthogonality condition, ensuring sufficiently fast pointwise decay for the inner solution.

On the other hand, another heat flow that shares a similar structure is the six-dimensional  Fujita equation with critical nonlinearity
\begin{equation}\label{6DFujita}
u_t=\Delta u+u^2 ~\mbox{ in }~\R^6\times \R_+,
\end{equation}
and the current study is also motivated by a program proposed by Fila and King \cite{FilaKing12} concerning the threshold behavior of global solutions depending on the initial data in a precisely manner. The Fila-King program has received much attention in recent years, starting from the first rigorous construction  \cite{173D} by del Pino-Musso-Wei. Very recently, Harada \cite{HaradaOsc} achieved rather refined constructions of global unbounded, decaying and oscillatory solutions for \eqref{6DFujita}; see also  \cite{weizhouEPE} for an earlier result on the existence of positive  bounded solutions. 

To prove Theorem \ref{thm}, we use the parabolic gluing method developed in \cite{CortazarDelPinoMussoJEMS} and \cite{DavilaDelPinoWei2020}. In Section \ref{gluing-scheme}, we set up the inner-outer scheme and give the linear theory for the inner problem and outer problem, respectively.  In Section \ref{outer-problem}, we give the estimates for the outer problem. To ensure the solvable of the inner problem, we solve the scaling parameter in Section \ref{oscillating-scale}. In this process, from the orthogonality condition, we recover the global scaling law of Gustafson-Nakanishi-Tsai \cite[Theorem 1.2]{GNT10CMP} under the assumption that $\Theta_0(r)\sim r^{-2}\log^{-a} r$ as $r\to+\infty$, $0<a<1$, see Section \ref{GNT-formula}. In Section \ref{inner-problem}, we solve inner problem and complete the proof of Theorem \ref{thm}. 

\medskip

\section{The inner outer gluing scheme and estimates for the linear problems}\label{gluing-scheme}
The Yang-Mills heat flow in four dimensions reduces to 
\begin{equation}\label{eqn-YMHF}
\left\{
\begin{aligned}
    &u_t=u_{rr}+\frac5r u_r+6u^2-2r^2 u^3, ~&(r,t)\in \R_+\times \R_+,\\
    &u(r,t_0)=u_0(r),~& r\in \R_+
\end{aligned}
\right.
\end{equation}
with symmetry. It admits one-parameter family of steady solutions
\begin{equation}\label{YM-soliton}
    U_\la(r)=\frac{2}{r^2+\la^2}:=\la^{-2} U(\rho),\quad \rho:=\frac{r}{\la}\foral \la>0,
\end{equation}
known as the gravitational instantons. We denote the infinitesimal generator under dilation by 
\begin{equation}\label{kernel}
    Z(\rho)=\frac1{(1+\rho^2)^2}.
\end{equation}
Using \eqref{YM-soliton}, we aim to construct global solutions that exhibit precise long-term asymptotics.
Take the ansatz
\begin{equation}
    u=U_{\la(t)}(r)+\Theta(r,t)+\eta_R \lambda^{-2}\phi(\rho,t)+\psi(r,t)
\end{equation}
with
\begin{equation}\label{eqn-Theta}
    \left\{
\begin{aligned}
    &\Theta_t=\Theta_{rr}+\frac5r \Theta_r, ~&(r,t)\in \R_+\times (t_0,\infty),\\
    &\Theta(r,t_0)=\Theta_0(r),~& r\in \R_+
\end{aligned}
\right.
\end{equation}
for some initial data $\Theta_0(r)$ to be constructed later. Here $\eta_R:=\eta(\rho/R)$ with $\eta$ being the smooth cut-off function with $\eta(x) = 1$ for $|x|\leq 1$, $\eta(x) = 0$ for $|x|\geq 2$, and $R$ is a sufficiently large number.
Then we compute
\begin{equation}
    \begin{aligned}
    &~\psi_t+\lambda^{-2}\pp_t\eta_R \phi+\lambda^{-2}\eta_R(\phi_t-2\la^{-1}\dot\la\phi-\rho\phi_\rho \la^{-1}\dot\la)\\&~=\psi_{rr}+\frac5r{\psi_r}+\eta_R\la^{-2}\left(\phi_{rr}+\frac5r\phi_r\right)+2\la^{-2}\pp_r\eta_R\phi_r+\la^{-2}\phi\left(\pp_{rr}\eta_R+\frac5r\pp_r\eta_R\right)\\
        &~\quad  +6(U_\la+\Theta+\eta_R\la^{-2}\phi+\psi)^2-2r^2(U_\la+\Theta+\eta_R\la^{-2}\phi+\psi)^3+\frac{4\dot\la \la^{-3}}{(1+\rho^2)^2}-6U_\la^2+2r^2 U_\la^3
    \end{aligned}
\end{equation}
and split
\begin{equation}\label{eqn-inner}
    \la^2 \phi_t=\phi_{\rho\rho}+\frac5\rho\phi_\rho+(12U-6\rho^2 U^2)\phi+\la^2(12U-6\rho^2 U^2)(\psi+\Theta)+\frac{4\la\dot\la}{(1+\rho^2)^2} + \lambda^2\mathcal N[\la,\phi,\psi]~\mbox{ in }~B_{2R}\times (t_0,\infty),
\end{equation}
\begin{equation}\label{eqn-outer}
    \begin{aligned}
\psi_t=&~\psi_{rr}+\frac5r\psi_r+\la^{-2}(1-\eta_R)(12U-6\rho^2 U^2)(\psi+\Theta)+2\la^{-2}\pp_r\eta_R\phi_r+\la^{-2}\phi\left(\pp_{rr}\eta_R+\frac5r\pp_r\eta_R\right)\\
    &~+2\la^{-3}\dot\la \eta_R\phi+\la^{-3}\dot\la \eta_R \rho\phi_\rho+(1-\eta_R)\frac{4\dot\la \la^{-3}}{(1+\rho^2)^2}+\mathcal N[\la,\phi,\psi] ~\mbox{ in }~\R_+\times (t_0,\infty),
    \end{aligned}
\end{equation}
where
\begin{equation}\label{def-N}
    \begin{aligned}
        \mathcal N[\la,\phi,\psi]:=&~6(U_\la+\Theta+\eta_R\la^{-2}\phi+\psi)^2-2r^2(U_\la+\Theta+\eta_R\la^{-2}\phi+\psi)^3-6U_\la^2+2r^2 U_\la^3\\
        &~-(12U_\la-6r^2 U_\la^2)(\Theta+\eta_R \la^{-2}\phi+\psi).
    \end{aligned}
\end{equation}
The equations \eqref{eqn-inner} and \eqref{eqn-outer} are called respectively inner problem and outer problem and will be solved under zero initial condition. The estimates for the outer problem \eqref{eqn-outer} will be done by the Duhamel's formula in 6D with radial symmetry, and the resolution of the inner problem \eqref{eqn-inner} will be achieved by {\it apriori} estimates of the linear problem 
\begin{equation}\label{inner-linear-eq}
    \left\{
    \begin{aligned}
        &~\pp_\tau\phi=\phi_{\rho\rho}+\frac5\rho\phi_\rho+(12U-6\rho^2 U^2)\phi+H(\rho,\tau) ~\mbox{ in }~B_{2R}\times (\tau_0,\infty)\\
        &~\phi(\rho,\tau_0)=0 ~\mbox{ in }~B_{2R},
    \end{aligned}
    \right.
\end{equation}
where
$$
\tau=\tau(t):=\int_{t_0}^t \la^{-2}(s)ds +C t_0\la^{-2}(t_0),\quad \tau_0:=\tau(t_0)
$$
for sufficiently large $C>0$. We will use  the norms 
\begin{equation*}
\|H\|_{*}:=\sup_{\tau>\tau_0, ~\rho\in B_{2R}}\tau^\nu\langle \rho\rangle^{2+a}|H(\rho,\tau)|,\quad \nu>0,~0<a<1,
\end{equation*} 
\begin{equation}\label{inner-norm}
	\|\phi\|_{\rm{in}}:=\sup_{\tau>\tau_0,~ \rho\in B_{2R}}
	\tau^{\nu}R^{a-6}(t(\tau))\langle \rho\rangle^{6}
	 \big[\langle \rho\rangle |\phi_\rho(\rho,\tau)| + |\phi(\rho,\tau)| \big] ,
\end{equation}

The linear estimates for the inner problem \eqref{eqn-inner} is established in \cite[Lemma 6.1]{SWZ-YMHF}:
\begin{lemma}\label{Prop-inner}(\cite[Lemma 6.1]{SWZ-YMHF})
   For \eqref{inner-linear-eq}, assume that $\|H\|_{*}<\infty$ and
	\begin{equation}\label{inner-orthogonal-cond}
		\int_0^{4R} H(\rho,\tau)Z(\rho)\rho^5d\rho=0, \quad \forall \tau\in(\tau_0,\infty).
	\end{equation}
	Then  for $\tau_0$ sufficiently large, there exists a solution $ \phi=\mathcal{T}_{\rm{in}}[H],
	$ linear in $H$, that satisfies the estimate
	\begin{equation*}
		\|\phi\|_{\rm{in}} \lesssim \|H\|_{*}.
	\end{equation*}
\end{lemma}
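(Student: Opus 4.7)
My plan is to prove the lemma via the standard inner linear theory for parabolic gluing in the spirit of \cite{CortazarDelPinoMussoJEMS,DavilaDelPinoWei2020}, adapted to the $SO(4)$-equivariant Yang-Mills setting. A direct computation shows that the linearized operator $L:=\partial_{\rho\rho}+\frac{5}{\rho}\partial_\rho+\frac{24}{(1+\rho^2)^2}$ (coming from $12U-6\rho^2 U^2$) admits the bounded, radial kernel $Z=(1+\rho^2)^{-2}$, which generates scale invariance and obstructs solvability in weighted spaces with fast decay. The orthogonality hypothesis \eqref{inner-orthogonal-cond} is designed precisely to remove this obstruction.

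The proof has two stages. First I would establish the a priori bound $\|\phi\|_{\rm in}\lesssim \|H\|_*$ by contradiction: take sequences with $\|\phi_n\|_{\rm in}=1$ and $\|H_n\|_*\to 0$, and blow up around a point $(\rho_n,\tau_n)$ where the normalised quantity $\tau^\nu R^{a-6}\langle\rho\rangle^6[\langle\rho\rangle|\partial_\rho\phi_n|+|\phi_n|]$ is nearly attained. Standard parabolic regularity yields $C^{2+\alpha,1+\alpha/2}_{\rm loc}$ compactness, and one distinguishes three cases. Case (i): $\rho_n$ stays bounded and $\tau_n\to\infty$, giving an entire bounded solution $\phi_\infty$ of $\partial_\tau\phi=L\phi$ on $\mathbb{R}^6\times\mathbb{R}$ with $\langle\rho\rangle^6|\phi_\infty|$ bounded; classification of such solutions, via the spectral decomposition of $L$ in $L^2(\rho^5 d\rho)$, forces $\phi_\infty=cZ$, and the orthogonality \eqref{inner-orthogonal-cond} passed to the limit forces $c=0$. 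Case (ii): $\rho_n\sim R$, a boundary case handled by rescaling to the $R$-scale and using that $Z\rho^5\sim \rho^{-3}$ there controls the boundary flux. Case (iii): $\tau_n$ close to $\tau_0$, excluded by the zero initial datum via Duhamel's formula.

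The key analytic device is to multiply \eqref{inner-linear-eq} by $Z\rho^5$ and integrate on $(0,2R)$: using $LZ=0$ and integration by parts,
\begin{equation*}
\frac{d}{d\tau}\int_0^{2R}\phi Z\rho^5 d\rho = \text{(boundary flux at }\rho=2R\text{)}+\int_0^{2R}HZ\rho^5 d\rho.
\end{equation*}
The orthogonality imposed on the larger interval $(0,4R)$ absorbs the tail of $H$ past $2R$, while the boundary flux is controlled by the $\langle\rho\rangle^{-6}$ decay encoded in $\|\cdot\|_{\rm in}$, so the $Z$-mode of $\phi$ stays small and the solution inherits the fast decay. Once the a priori estimate is secured, existence follows by solving the problem on finite time windows $(\tau_0,T)$ using semigroup theory in a Banach space encoding the orthogonality as a linear constraint, and then letting $T\to\infty$; linearity in $H$ is preserved throughout.

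The main difficulty I foresee is Case (ii): the lemma prescribes no boundary condition at $\rho=2R$, so one must carefully quantify how the orthogonality --- imposed on the larger interval $(0,4R)$ to reach into the overlap region --- combines with a weighted barrier of the form $R^{a-6}\langle\rho\rangle^{-6}$ to yield genuine spatial decay all the way to $\rho\sim R$, rather than only an interior bound. This is where the specific exponents in the definition of $\|\cdot\|_{\rm in}$ must be matched against the scaling behaviour of $Z$ on the boundary annulus.
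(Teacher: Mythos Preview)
The paper does not prove this lemma; it imports it verbatim from \cite[Lemma 6.1]{SWZ-YMHF} and gives no argument of its own, so there is nothing in the present paper to compare against. Your outline follows the standard blow-up/compactness strategy used throughout the parabolic gluing literature (including \cite{CortazarDelPinoMussoJEMS,DavilaDelPinoWei2020} and, presumably, \cite{SWZ-YMHF} itself), and the overall architecture is the right one.

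One point to tighten in Case~(i): the orthogonality \eqref{inner-orthogonal-cond} is a condition on $H$, not on $\phi$, and since $\|H_n\|_*\to 0$ that condition becomes vacuous in the limit --- it cannot by itself force $c=0$. The correct mechanism to exclude $\phi_\infty=cZ$ with $c\neq 0$ is the multiply-by-$Z$ identity you write afterwards: it yields an ODE for $\int_0^{2R}\phi Z\rho^5\,d\rho$ whose forcing consists of $\int_0^{2R} HZ\rho^5\,d\rho$ (which, thanks to \eqref{inner-orthogonal-cond}, equals minus the tail $\int_{2R}^{4R}HZ\rho^5\,d\rho$ and is therefore $O(\tau^{-\nu}R^{-a})$) plus a boundary flux at $\rho=2R$ controlled by $\|\phi\|_{\rm in}$ with an extra power of $R^{-1}$. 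Integrating from $\tau_0$ with zero initial data shows the $Z$-mode of $\phi$ is strictly smaller than the size $\tau^{-\nu}R^{6-a}$ predicted by $\|\phi\|_{\rm in}=1$, and \emph{that} is what rules out a nontrivial $cZ$ in the blow-up limit. So the $Z$-mode control is an a priori estimate fed into the contradiction argument, not a limit of the orthogonality hypothesis; reorder your Case~(i) accordingly.
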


To deal with the outer problem \eqref{eqn-outer}, we define
\begin{equation*}
\mathcal{T}^{\rm{out}}_6\left[f\right]
:=
\int_{t_0}^t \int_{\R^6} 
\left[ 4\pi(t-s)\right]^{-3}
e^{ -\frac{|x-z|^2}{4(t-s)}  } f(z,s) dz ds 
\end{equation*}
under radial symmetry, and we invoke some convolution estimates in the spirit of \cite[Lemma A.1 and Lemma A.2]{4DFila-King}, the difference is that we need to deal with logarithmic decay.
\begin{lemma}\label{linear-outer}
Assume $v(t)\ge 0$, $a\in [0, 1)$, $b\in (-\infty, n]$, $n\geq 3$, $0\le l_1(t) \le l_2(t) \le C_{*} \sqrt{t}$, $C_l^{-1} l_i(t) \le l_i(s) \le C_l l_i(t)  $, $i=1,2$, for all $\frac{t}{2} \le s\le t$, $t\ge t_0\ge 0$, where $C_*>0,C_l\ge1$. Then
    \begin{equation}\label{RHS-with-upper-bound}
		\begin{aligned}
			&
			\TT_n^{out}\left[ v(t) |x|^{-b} (\log |x|)^{-a}\1_{\{ l_1(t) \le |x| \le l_2(t) \}}\right]
			\lesssim
			t^{-\frac{n}{2}}
			e^
			{-\frac{|x|^2}{16 t } }
			\int_{\frac{t_0}{2}}^{\frac{t}{2} }   v(s)
			\begin{cases}
				l_2^{n-b}(s)(\log l_2(s))^{-a}
				&
				\mbox{ \ if \ } b<n
				\\
                (\log(l_2(s)))^{1-a}
				&
				\mbox{ \ if \ } b=n
			\end{cases}
			d s
			\\
			&
			+
			\sup\limits_{ t_1 \in [t/2,t] } v(t_1)
			\begin{cases}
				\begin{cases}
					l_2^{2-b}(t)(\log l_2(t))^{-a}
					& \mbox{ \ if \ }
					b<2
					\\
				\langle	\ln (\frac{l_2(t)}{l_1(t)}) \rangle (\log l_1(t))^{-a}
					& \mbox{ \ if \ }
					b=2
					\\
					l_1^{2-b}(t)(\log l_1(t))^{-a}
					& \mbox{ \ if \ }
					b>2
				\end{cases}
				&
				\mbox{ \ for \ }  |x| \le l_1(t)
				\\
				\begin{cases}
					l_2^{2-b}(t)\langle\log l_2(t)\rangle^{-a}
					&
					\mbox{ \ if \ } b<2
					\\
				\langle\log(\frac{l_2(t)}{|x|} )  \rangle \langle\log |x|\rangle^{-a}
					&
					\mbox{ \ if \ } b=2
					\\
					|x|^{2-b}\langle\log |x|\rangle^{-a}
					&
					\mbox{ \ if \ } 2<b<n
					\\
					|x|^{2-n} \langle	\log (\frac{|x|}{l_1(t)}) \rangle\langle\log l_1(t)\rangle^{-a}
					&
					\mbox{ \ if \ } b=n
				\end{cases}
				&
				\mbox{ \ for \ }
				l_1(t) < |x| \le l_2(t)
				\\
				|x|^{2-n}
			e^{-\frac{|x|^2}{16 t}}
			\begin{cases}
				l_2^{n-b}(t)\langle	\log l_2(t)\rangle^{-a}
				&
				\mbox{ \ if \ } b<n
				\\
				\langle	\log l_2(t)\rangle^{1-a}
				&
				\mbox{ \ if \ } b=n
				\end{cases}
				&
				\mbox{ \ for \ }
				|x| > l_2(t)
			\end{cases}
		\end{aligned}
	\end{equation}
    and
    \begin{equation}\label{RHS-without-upper-bound}
		\begin{aligned}
			&
			\TT_{n}^{out}\left[v(t) |x|^{-b} (\log |x|)^{-a}\1_{\{ r\ge t^{\frac 12}\}}\right]\lesssim
			\\
			&
			\begin{cases}
				t^{-\frac{n}{2}}
				\int_{t_{0}/2}^{t/2}
				v(s)
				\begin{cases}
					t^{\frac{n-b}{2}}(\log t)^{-a}
					&
					\mbox{ \ if \ } b<n
					\\
				(\log(t))^{1-a}
					&
					\mbox{ \ if \ } b=n
				\end{cases}
				d s
				+ t^{1-\frac{b}{2}}(\log t)^{-a} \sup\limits_{ t_1 \in [t/2,t] } v(t_1)
				\mbox{ \ if \ } |x|\le t^{\frac 12}
				\\ 
				  |x|^{-b}(\log t)^{-a}
				\left( t \sup\limits_{ t_1 \in [t/2,t] } v(t_1) +	\int_{t_{0}/2}^{t/2}
				v(s) d s \right) \\
                \qquad\qquad\qquad\qquad +
				t^{-\frac{n}{2}} e^{-\frac{|x|^2}{16 t} }
				\int_{t_{0}/2}^{t/2}
				v(s)
				\begin{cases}
					0
					&
					\mbox{ \ if \ }
					b<n
					\\
				(\log(|x|))^{1-a}
				&
				\mbox{ \ if \ }
				b=n
				\end{cases}
				d s
				\mbox{ \ if \ } |x| > t^{\frac 12},
			\end{cases}
		\end{aligned}
	\end{equation}
where ``$\lesssim$'' is independent of $t_0$ and we assume $v(s)=0$ for $s<t_0$.	
\end{lemma}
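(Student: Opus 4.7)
The estimate is a standard Duhamel convolution bound adapted to include a logarithmic weight. The plan is to split the time integration at $s=t/2$:
\begin{equation*}
\mathcal{T}_n^{out}[f]=\int_{t_0}^{t/2}(\cdots)\,ds+\int_{t/2}^{t}(\cdots)\,ds=:\mathrm{I}(x,t)+\mathrm{II}(x,t),
\end{equation*}
and treat the two ranges by different techniques. For $\mathrm{I}$ the elapsed time $t-s\sim t$ is large so the heat kernel is essentially a global Gaussian that does not resolve the annular geometry; for $\mathrm{II}$ the kernel resolves the geometry but $l_i(s)\sim l_i(t)$ by the doubling hypothesis, so the spatial profile is frozen at time $t$.

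\textbf{Far-time part $\mathrm{I}$.} On $s\in[t_0/2,t/2]$ use
\begin{equation*}
[4\pi(t-s)]^{-n/2}e^{-|x-z|^2/[4(t-s)]}\leq C\,t^{-n/2}e^{-|x-z|^2/(8t)}\leq C\,t^{-n/2}e^{-|x|^2/(16t)}e^{|z|^2/(8t)}.
\end{equation*}
Since $|z|\leq l_2(s)\leq C_*\sqrt{s}\lesssim\sqrt{t}$ on the support, the factor $e^{|z|^2/(8t)}$ is bounded, and I am left with
\begin{equation*}
\int_{l_1(s)}^{l_2(s)} r^{n-1-b}(\log r)^{-a}\,dr,
\end{equation*}
which yields $l_2(s)^{n-b}(\log l_2(s))^{-a}$ when $b<n$ and $(\log l_2(s))^{1-a}$ when $b=n$. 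This produces the first line of \eqref{RHS-with-upper-bound} and the corresponding terms in \eqref{RHS-without-upper-bound}. The factor $|x|^{2-n}$ appearing in \eqref{RHS-with-upper-bound} for $|x|>l_2(t)$ comes from replacing the cruder $e^{-|x|^2/(16t)}t^{-n/2}$ by the sharper pointwise Newtonian bound $|x|^{2-n}e^{-|x|^2/(16t)}$ valid when $|x|\gtrsim\sqrt{t}$.

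\textbf{Near-time part $\mathrm{II}$.} Here use $l_i(s)\sim l_i(t)$ and $v(s)\lesssim\sup_{[t/2,t]}v$, reducing to estimating
\begin{equation*}
\int_0^{t} d\sigma\int_{l_1(t)\leq|z|\leq l_2(t)}[4\pi\sigma]^{-n/2}e^{-|x-z|^2/(4\sigma)}|z|^{-b}(\log|z|)^{-a}\,dz.
\end{equation*}
Split the spatial integration into three regimes according to whether $|x|\leq l_1(t)$, $l_1(t)<|x|\leq l_2(t)$, or $|x|>l_2(t)$. In the inner regime $|x|\leq l_1(t)$ use that $|z-x|\gtrsim l_1(t)-|x|$ on the support but more usefully split $z$ into dyadic annuli $|z|\sim 2^k l_1(t)$ up to $l_2(t)$; the $\sigma$-integral on each annulus produces a power $|z|^{2-b}(\log|z|)^{-a}$, summed over $k$ this gives the three subcases $b<2,\,b=2,\,b>2$ as stated. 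In the middle regime $l_1(t)<|x|\leq l_2(t)$ split the $z$-integration into $|z|\leq |x|/2$, $|z|\sim|x|$, $|z|\geq 2|x|$: the dominant contribution comes from $|z|\sim|x|$ and yields $|x|^{2-b}(\log|x|)^{-a}$, with the logarithmic corrections appearing at $b=2$ and $b=n$ from the critical dyadic sums. In the outer regime $|x|>l_2(t)$ the support lies in $|z|\leq l_2(t)<|x|$ so $|x-z|\geq|x|/2$; the standard lemma $\int_0^t\sigma^{-n/2}e^{-c|x|^2/\sigma}\,d\sigma\lesssim|x|^{2-n}e^{-c|x|^2/(2t)}$ then gives the final factor.

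\textbf{Main obstacle.} The principal technical difficulty is the bookkeeping of logarithmic weights at the borderline exponents $b=2$ (appearing when the spatial integral $\int r^{1-b}(\log r)^{-a}dr$ turns logarithmic) and $b=n$ (when the radial integral against the Jacobian $r^{n-1}$ does so). In those cases one must retain a factor $\langle\log(l_2/l_1)\rangle$ or $\langle\log(|x|/l_1)\rangle$ rather than a power gain, and verify the $(\log\cdot)^{-a}$ weight is frozen at the correct radius (the smaller of $|x|$ and $l_1(t)$). A further subtle point is to check that the bounds in the three spatial regions match continuously across $|x|=l_1(t)$ and $|x|=l_2(t)$, which is where the brackets $\langle\,\cdot\,\rangle$ (used to prevent sign changes of logarithms near $1$) and the choice of $\log l_1$ vs.\ $\log|x|$ are forced. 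Aside from these borderline cases the argument is the same as in \cite[Lemma A.1--A.2]{4DFila-King}, and the $(\log|z|)^{-a}$ factor passes through each estimate as an essentially constant multiplier evaluated at the dominant radius of each dyadic block.
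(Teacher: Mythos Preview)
Your outline matches the paper's proof closely: the paper also splits the time integral at $s=t/2$, handles the far-time piece $u_1$ by bounding the kernel by $t^{-n/2}e^{-|x|^2/(16t)}$ and integrating the source in $y$, and handles the near-time piece $u_2$ by freezing $l_i(s)\sim l_i(t)$, bounding $v(s)$ by its supremum, and splitting the spatial integration into the three regions $|x|\lesssim l_1(t)$, $l_1(t)\lesssim|x|\lesssim l_2(t)$, $|x|\gtrsim l_2(t)$, with a further split of $y$ into $\{|y|\le|x|/2\}$, $\{|y|\sim|x|\}$, $\{|y|\ge 2|x|\}$ in the middle region. The only stylistic difference is that the paper carries out the $\sigma$- and $y$-integrals by explicit changes of variable (to $z=l_1^2/(16C_l^2(t-s))$ etc.) rather than dyadic summation, and in particular further splits the $s$-integral in the inner region into $[t/2,t-l_2^2]$, $[t-l_2^2,t-l_1^2]$, $[t-l_1^2,t]$ to read off the three subcases $b<2$, $b=2$, $b>2$ directly.

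There is, however, a genuine gap in your far-time argument for \eqref{RHS-without-upper-bound}. Your bound $e^{-|x-z|^2/(8t)}\le e^{-|x|^2/(16t)}e^{|z|^2/(8t)}$ followed by ``$e^{|z|^2/(8t)}$ is bounded on the support'' uses $|z|\le l_2(s)\le C_*\sqrt{s}$, which holds only for the first estimate. In \eqref{RHS-without-upper-bound} the support is $\{|z|\ge s^{1/2}\}$ with no upper bound, so $e^{|z|^2/(8t)}$ diverges and the spatial integral $\int_{\sqrt{s}}^\infty r^{n-1-b}(\log r)^{-a}e^{r^2/(8t)}\,dr$ is infinite. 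The paper avoids this by \emph{not} peeling off the Gaussian in $x$ first: instead it splits the $y$-domain into $\{s^{1/2}\le|y|\le 4t^{1/2}\}$ (drop the exponential), $\{|y|\ge 4t^{1/2}\}$ (use $|x-y|\ge|y|/2$) when $|x|\le 2t^{1/2}$, and into $\{|y|\le|x|/2\}$, $\{|y|\sim|x|\}$, $\{|y|\ge 2|x|\}$ when $|x|>2t^{1/2}$, which produces the three contributions $e^{-|x|^2/(16t)}|x|^{n-b}$, $t^{n/2}|x|^{-b}(\log|x|)^{-a}$, and $t^{(n-b)/2}e^{-|x|^2/(8t)}$ that combine to the stated bound. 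You should replace your far-time treatment of \eqref{RHS-without-upper-bound} by this decomposition.
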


Furthermore, we need to estimate the Cauchy problem with initial data. Consider the problem 
\begin{equation}\label{HeatEquation}
    \left\{
\begin{aligned}
    &\Theta_t=\Theta_{rr}+\frac5r \Theta_r, ~&(r,t)\in \R_+\times (t_0,\infty),\\
    &\Theta(r,t_0)=\Theta_0(r),~& r\in \R_+.
\end{aligned}
\right.
\end{equation}
Here, we assume $|\Theta_0(r)|\lesssim \langle r\rangle^{-2}(\log\langle r\rangle)^{-a}.$
By similar arguments as  in \cite[Lemma A.3]{4DFila-King}, we have 
\begin{lemma}\label{Lemma2.32.3}
Assume $a \in [0, 1)$ and $b < n$, $|\Theta_0(r)|\lesssim \langle r\rangle^{-b}(\log\langle r\rangle)^{-a}$, $r= |x|$, then, for $t_0$ sufficiently large, the solution $\Theta$ of (\ref{HeatEquation}) satisfies the following estimates,
\begin{equation*}
	\begin{aligned}
		&|\Theta(r,t)|
		\lesssim 
			\langle t \rangle^{-\frac{b}{2}}(\log(t+2))^{-a}\1_{\{|x|\leq\sqrt t\}} + \langle x\rangle^{-b}(\ln (|x|+2))^{-a}\1_{\{|x|\geq\sqrt t\}} ,
	\end{aligned}
\end{equation*}
\begin{align*}
&\left|\Theta_r(r,t)\right| \lesssim
			\langle t \rangle^{-\frac{b}{2}}(\log(t+2))^{-a}\frac{1}{\sqrt{t}}\1_{\{|x|\leq\sqrt t\}} + \langle x\rangle^{-b}(\ln (|x|+2))^{-a}\frac{|x|}{t}\1_{\{|x|\geq\sqrt t\}}  .
\end{align*}
In particular, for $b=2$, we have 
\begin{equation}\label{Estimate-for-Theta}
    \begin{aligned}
        &~|\Theta(r,t)|\lesssim \langle t \rangle^{-1}(\log(t+2))^{-a}\1_{\{|x|\leq\sqrt t\}}+ \langle x\rangle^{-2}(\ln (|x|+2))^{-a}\1_{\{|x|\geq\sqrt t\}},
    \end{aligned}
\end{equation}
\begin{equation}\label{Estimate-for-Theta-Gradient}
    \begin{aligned}
        &~|\Theta_r(r,t)|\lesssim \langle t \rangle^{-1}(\log(t+2))^{-a}\frac{1}{\sqrt{t}}\1_{\{|x|\leq\sqrt t\}}   + \langle x\rangle^{-2}(\ln (|x|+2))^{-a}\frac{|x|}{t}\1_{\{|x|\geq\sqrt t\}}
    \end{aligned}
\end{equation}
 for $t\geq t_0$.
\end{lemma}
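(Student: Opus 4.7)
The plan is to treat the radial equation $\Theta_t=\Theta_{rr}+\frac{5}{r}\Theta_r$ as the heat equation in $\R^6$ under radial symmetry, and then directly exploit the 6D Gaussian heat kernel representation
\begin{equation*}
\Theta(x,t)=\int_{\R^6}\frac{1}{[4\pi(t-t_0)]^{3}}e^{-\frac{|x-y|^2}{4(t-t_0)}}\Theta_0(|y|)\, dy.
\end{equation*}
The proof is then an essentially elementary, but careful, pointwise estimate of this convolution using the assumed bound $|\Theta_0(y)|\lesssim \langle y\rangle^{-b}(\log\langle y\rangle)^{-a}$. The strategy mirrors that of \cite[Lemma A.3]{4DFila-King}; the only new feature here is that the dimension is $6$ instead of $4$ and the condition $b<n$ with $n=6$ must be used to guarantee the integrability at infinity.

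First I split the integration region into a near part $\{|y|\le\sqrt{t}\}$ and a far part $\{|y|\ge\sqrt{t}\}$, and then further into pieces according to the position of $x$. For $|x|\le\sqrt{t}$, the kernel is bounded by $C t^{-3}e^{-|x-y|^2/(4t)}$ up to constants, and since $b<6$ the contribution from $|y|\le\sqrt{t}$ is controlled by
\begin{equation*}
t^{-3}\int_{|y|\le\sqrt{t}}\langle y\rangle^{-b}(\log\langle y\rangle)^{-a}dy\lesssim \langle t\rangle^{-b/2}(\log(t+2))^{-a},
\end{equation*}
after passing to polar coordinates, whereas the tail $|y|\ge\sqrt{t}$ produces a Gaussian in $|x|/\sqrt{t}$ times the same quantity and is therefore dominated by the same bound. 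For $|x|\ge\sqrt{t}$ I split again according to $|y|\le |x|/2$, $|y|\sim|x|$, and $|y|\ge 2|x|$: in the first and third regions one gains the factor $e^{-c|x|^2/t}$ from the kernel and may absorb any polynomial into it, while in the middle region the factor $\langle y\rangle^{-b}(\log\langle y\rangle)^{-a}$ can be replaced by $\langle x\rangle^{-b}(\log\langle x\rangle)^{-a}$ and pulled out, leaving a Gaussian integral of size $O(1)$. Combining these pieces gives the stated bound on $\Theta$, and the specialization $b=2$ reproduces \eqref{Estimate-for-Theta}.

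For the gradient estimate, I differentiate the kernel and pick up an extra factor $\frac{|x-y|}{2(t-t_0)}$ inside the convolution. In the regime $|x|\le\sqrt{t}$ this factor is effectively $O(t^{-1/2})$ after integrating out the Gaussian, producing the $1/\sqrt{t}$ improvement stated in the lemma. In the regime $|x|\ge\sqrt{t}$, inside the dominant region $|y|\sim|x|$ one has $|x-y|\lesssim|x|$ and the available integrable Gaussian scale is still $\sqrt{t}$, which after a careful bookkeeping gives the advertised factor $|x|/t$. Specialization to $b=2$ yields \eqref{Estimate-for-Theta-Gradient}.

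The main obstacle I expect is not the mechanics of the kernel estimate, which is standard, but the bookkeeping of the logarithmic factor across the various matching regions, especially ensuring that the logarithm attached to the answer at a given point $x$ is evaluated at the correct scale ($\log\langle x\rangle$ for the far-field and $\log(t+2)$ for the near-field). Since $b<n=6$ the integrals are not critical and no borderline logarithmic correction is produced by the integration itself, so the only logarithm that appears is the one already carried by the initial datum; this must be tracked through the dyadic decomposition and, in the middle annulus $|y|\sim |x|$, justified by comparing $\log\langle y\rangle$ with $\log\langle x\rangle$. Once this is in place, the proof reduces to routine Gaussian estimates and polar-coordinate computations.
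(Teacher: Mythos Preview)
Your proposal is correct and follows essentially the same approach as the paper: both represent $\Theta$ via the $n$-dimensional Gaussian heat kernel and bound the convolution by splitting the $y$-integral into annular regions (near $x$, far from $x$) and distinguishing the cases $|x|\le\sqrt{t}$ and $|x|\ge\sqrt{t}$. The only cosmetic differences are that the paper orders the decomposition by $|y|$ relative to $|x|$ first (three annuli $|y|\le|x|/2$, $|x|/2\le|y|\le2|x|$, $|y|\ge2|x|$) rather than by $|y|$ relative to $\sqrt{t}$, and for the gradient the paper writes $\frac{x-y}{2t}=\frac{x}{2t}-\frac{y}{2t}$ and bounds the two pieces separately instead of bounding $|x-y|$ directly; both variants are standard and lead to the same estimates.
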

We postpone the proof of Lemma \ref{linear-outer} and Lemma \ref{Lemma2.32.3} to Appendix \ref{Appendix-convo}.

\medskip

\section{Global Scaling Law}\label{GNT-formula}

\medskip

In this section, we aim to capture the main global scaling law for $\la(t)$ with precise dependence on the initial data $\Theta_0(r)$ in the spirit of  \cite[Theorem 1.2]{GNT10CMP}. We will show that this can be achieved by the orthogonality condition \eqref{inner-orthogonal-cond}. Indeed, the two main terms in \eqref{eqn-inner} enter \eqref{inner-orthogonal-cond} as
\begin{equation}\label{orthog-214214}
    \int_0^{4R} \left[(12U-6\rho^2 U^2)\Theta(\la\rho,t)+\frac{4\la^{-1}\dot\la}{(1+\rho^2)^2}\right]Z(\rho)\rho^5 d\rho\approx 0.
\end{equation}
Here, the solution $\Theta(r,t)=\Theta(\la\rho,t)$ of (\ref{eqn-Theta}) can be obtained using radial heat kerne as follows,
\begin{equation*}
    \begin{aligned}
    &~\Theta(r,t)=\int_0^\infty \Gamma_6(r,s;t)\Theta_0(s)ds\\
    &~\Gamma_6(r,s;t):=\frac{s^3}{2t r^2}e^{-\frac{r^2+s^2}{4t}}I_2(\frac{rs}{2t}),\quad I_2(\xi)\sim\begin{cases}
        \frac{\xi^2}{8}(1+O(\xi^2)),\quad &\xi\to 0\\
        \frac{1}{\sqrt{2\pi}}e^\xi \xi^{-1/2}(1+O(\frac{1}{\xi})),\quad &\xi\to\infty
    \end{cases}.
    \end{aligned}
\end{equation*}
The asymptotics above can be found in \cite[pp. 375-377]{AbramowitzStegun}. We postpone the derivation of radial heat kernel in Appendix \ref{radial-heat-kernel}. Then we have the following characterization of the scaling parameter. 
\begin{proposition}
Suppose $\la R\ll\sqrt{t}$ and the initial data $\Theta_0(r)$ decays like
$$
\Theta_0(r)\sim r^{-2}\log^{-a} r ~\mbox{ as }~r\to+\infty,\quad 0<a<1,
$$
then the solution of (\ref{orthog-214214}) satisfies the following relation,
\begin{equation}\label{GNK-law}
\log\la(t) =  \left(-\frac{3}{2}+O\left(\frac{1}{\log t}\right)\right)\int_{\sqrt{t_0}}^{\sqrt{t}} s\Theta_0(s)ds + C(t_0, t).
\end{equation}    
Here $C(t_0, t)$ is a smooth bounded function of $t\in (t_0, \infty)$.
\end{proposition}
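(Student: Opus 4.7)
The plan is to convert the approximate orthogonality condition \eqref{orthog-214214} into a scalar ODE for $\log\la(t)$ and then integrate it using the explicit radial 6D heat kernel evaluated at the spatial origin. First I would substitute the explicit formulas $U(\rho)=\tfrac{2}{1+\rho^2}$ and $Z(\rho)=\tfrac{1}{(1+\rho^2)^2}$. The algebraic identity $12U-6\rho^2U^2=\tfrac{24}{(1+\rho^2)^2}$ collapses \eqref{orthog-214214} to
$$4\,\frac{\dot\la}{\la}\int_0^{4R}\frac{\rho^5\,d\rho}{(1+\rho^2)^4}\,+\,24\int_0^{4R}\frac{\Theta(\la\rho,t)\,\rho^5\,d\rho}{(1+\rho^2)^4}=0.$$
A beta-function computation gives $\int_0^\infty \rho^5/(1+\rho^2)^4\,d\rho=\tfrac16$, with truncation error $O(R^{-2})$. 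Because $\la R\ll\sqrt t$, the gradient bound in Lemma \ref{Lemma2.32.3} yields $|\Theta(\la\rho,t)-\Theta(0,t)|\lesssim \la R\,t^{-3/2}(\log t)^{-a}$, which is $o(1)\cdot\Theta(0,t)$ since $\Theta(0,t)\sim t^{-1}(\log t)^{-a}$. Pulling $\Theta(0,t)$ out of the integral reduces the orthogonality to $\dot\la/\la=-6\,\Theta(0,t)(1+o(1))$.

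Next, the small-argument expansion $I_2(\xi)\sim\xi^2/8$ gives
$$\Theta(0,t)=\frac{1}{64\,t^3}\int_0^\infty s^5 e^{-s^2/(4t)}\Theta_0(s)\,ds.$$
Integrating the ODE from $t_0$ to $t$, applying Fubini, and evaluating the inner time integral by the substitution $u=s^2/(4\tau)$ one gets, in closed form,
$$\int_{t_0}^t \frac{s^5 e^{-s^2/(4\tau)}}{64\,\tau^3}\,d\tau\;=\;\frac{s}{4}\bigl[G(s,t)-G(s,t_0)\bigr],\qquad G(s,\tau):=\Bigl(\frac{s^2}{4\tau}+1\Bigr)e^{-s^2/(4\tau)}.$$
Consequently
$$\log\la(t)=\log\la(t_0)\,-\,\frac{3}{2}\int_0^\infty s\,\Theta_0(s)\bigl[G(s,t)-G(s,t_0)\bigr]ds\,+\,o(1).$$

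For the third step, the kernel $G(\cdot,\tau)$ is smooth, decreasing in $s$, equal to $1$ at $s=0$ and vanishing at $s=\infty$, with transition scale $s\sim\sqrt\tau$, so $G(s,t)-G(s,t_0)$ closely approximates $\mathbf{1}_{[\sqrt{t_0},\sqrt t]}(s)$. Quantitatively, the Taylor bound $|1-G(x)|\le x^2$ on $x\in[0,1]$ gives $|1-G(s,t)|\lesssim s^4/t^2$ for $s\le 2\sqrt t$, and $G(s,t)$ decays in a Gaussian fashion for $s\ge 2\sqrt t$. Plugging in the hypothesis $|\Theta_0(s)|\sim s^{-2}(\log s)^{-a}$ and performing the change of variables $u=s^2/(4t)$ in the tail region, both error pieces are of size $(\log t)^{-a}$:
$$\int_0^\infty s\Theta_0(s)\bigl[G(s,t)-G(s,t_0)\bigr]ds=\int_{\sqrt{t_0}}^{\sqrt t}s\Theta_0(s)\,ds+O\bigl((\log t)^{-a}\bigr)+\mathcal{C}(t_0),$$
where $\mathcal{C}(t_0)=-\int_0^\infty s\Theta_0(s)G(s,t_0)\,ds+\int_0^{\sqrt{t_0}}s\Theta_0(s)\,ds$ is bounded and independent of $t$. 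The hypothesis forces the main term $\int_{\sqrt{t_0}}^{\sqrt t}s\Theta_0(s)\,ds$ to be of size $(\log t)^{1-a}$, so the transition error is precisely $O(1/\log t)$ relative to it. Absorbing $\log\la(t_0)+\mathcal{C}(t_0)$ together with the bounded $o(1)$ corrections into $C(t_0,t)$ then produces \eqref{GNK-law}.

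The principal obstacle is the last step: one must verify that the transition layer at $s\sim\sqrt t$, where both the kernel $G$ and the factor $(\log s)^{-a}$ in $s\Theta_0(s)$ vary on the same logarithmic scale that defines the main term, contributes a \emph{relative} $O(1/\log t)$ error and not an $O(1)$ one. The change of variables $u=s^2/(4t)$ is what converts the tail integrals into bounded Gamma-type integrals multiplied by $(\log t)^{-a}$, and this is precisely the mechanism that pins down the exact coefficient $-3/2$ in \eqref{GNK-law}.
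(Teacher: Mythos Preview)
Your argument is correct and in fact cleaner than the paper's. The paper keeps the full $\Theta(\la\rho,t)$ in the orthogonality integral, expands the Bessel kernel $I_2(\la\rho s/(2t))$ into its small- and large-argument regimes, and splits the resulting double integral into five pieces $I_1,\dots,I_5$ according to the ranges of $s$ and $\rho$. After integrating in $t$ via Fubini (using the same antiderivative $(s^2/(4\tau)+1)e^{-s^2/(4\tau)}$ that you found), the paper must then track a nontrivial cancellation: the contribution $\tfrac{15}{8}e^{-1/4}\int_{\sqrt{t_0}}^{\sqrt t} s\Theta_0(s)\,ds$ arising from the upper endpoint $s=\sqrt\eta$ of $I_1$ is exactly offset by the integrated $I_2$ piece, leaving the clean coefficient $-\tfrac32$.

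Your shortcut is to approximate $\Theta(\la\rho,t)$ by $\Theta(0,t)$ at the outset, justified by the gradient bound in Lemma~\ref{Lemma2.32.3} and the hypothesis $\la R\ll\sqrt t$. This collapses the $\rho$-integral immediately and places the Bessel kernel in its small-argument regime for all $s$, so the splitting and the $e^{-1/4}$ cancellation never appear; the kernel $G(s,\tau)$ automatically encodes both $I_1$ and $I_2$ at once. Your transition-layer estimate (comparing $G(s,t)-G(s,t_0)$ to $\mathbf 1_{[\sqrt{t_0},\sqrt t]}$ with absolute error $O((\log t)^{-a})$, hence relative error $O(1/\log t)$) is the direct analogue of what the paper does piecewise. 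One small point: the $O(R^{-2})$ truncation error you mention is not literally bounded after time-integration (it contributes a term of order $(\log t)^{1-a}R^{-2}$), so it should be placed in the multiplicative $O(1/\log t)$ slot rather than in $C(t_0,t)$; the paper treats this the same way.
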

\begin{proof}
Recall we have $\la R\ll\sqrt{t}$, then for the first term in (\ref{orthog-214214}),
\begin{align}\label{def-I1234}
&~\int_0^{4R} (12U-6\rho^2 U^2)\Theta(\la\rho,t)Z(\rho)\rho^5 d\rho\notag\\
&~ =\int_0^{4R} V(\rho)Z(\rho)\rho^5 d\rho\int_0^\infty \frac{s^3}{2t\la^2\rho^2}e^{-\frac{\la^2\rho^2+s^2}{4t}}I_2(\frac{\la\rho s}{2t})\Theta_0(s)ds\\&~=\frac1{64t^3}\int_0^{4R}V(\rho)Z(\rho)\rho^5 e^{-\frac{\la^2\rho^2}{4t}}d\rho\int_0^{\frac{2t}{\la\rho}} \Theta_0(s) s^5 e^{-\frac{s^2}{4t}}ds\notag\\
&~\quad +\frac{1}{\sqrt{\pi}}\int_0^{4R}V(\rho)Z(\rho)\rho^5\frac{1}{(\la\rho)^{5/2}t^{1/2}}d\rho\int_{\frac{2t}{\la\rho}}^\infty s^{5/2}\Theta_0(s) e^{-\frac{(\la\rho-s)^2}{4t}}ds\notag\\
&~=\int_0^{\sqrt t}\frac{s^5}{64t^3} e^{-\frac{s^2}{4t}}\Theta_0(s)ds\int_0^{4R}V(\rho) Z(\rho)\rho^5 e^{-\frac{\la^2\rho^2}{4t}} d\rho\notag\\
&~\quad+\int_{\sqrt t}^{\frac{t}{2\la R}}\frac{s^5}{64t^3} e^{-\frac{s^2}{4t}}\Theta_0(s)ds\int_0^{4R}V(\rho) Z(\rho)\rho^5 e^{-\frac{\la^2\rho^2}{4t}} d\rho\notag\\
&~\quad+\int_{\frac{t}{2\la R}}^\infty \frac{s^5}{64t^3} e^{-\frac{s^2}{4t}}\Theta_0(s)ds \int_0^{\frac{2t}{\la s}} V(\rho) Z(\rho)\rho^5 e^{-\frac{\la^2\rho^2}{4t}} d\rho\notag\\
&~\quad+\la^{-5/2}\frac{1}{\sqrt{\pi t}}\int_{\frac{t}{2\la R}}^\infty s^{5/2}\Theta_0(s) ds \int_{\frac{2t}{\la s}}^{4R} V(\rho) Z(\rho)\rho^{5/2}e^{-\frac{(\la\rho-s)^2}{4t}}\left(1+O\left(\frac{2t}{\lambda \rho s}\right)\right) d\rho\notag\\
&~\quad + \int_0^{4R}V(\rho)Z(\rho)\rho^5 e^{-\frac{\la^2\rho^2}{4t}}d\rho\int_0^{\frac{2t}{\la\rho}} O\left(\frac{\lambda^2\rho^2s^7}{254t^3}\right)\Theta_0(s) e^{-\frac{s^2}{4t}}ds\notag\\
&~ :=I_1+I_2+I_3+I_4 + I_5.\notag
\end{align}
Here $Z(\rho)=\frac1{(1+\rho^2)^2}$, $V(\rho)=\frac{24}{(1+\rho^2)^2}$.
For the term $I_1$, we have 
\begin{align*}
    I_1=&~\int_0^{\sqrt t}\frac{s^5}{64t^3} e^{-\frac{s^2}{4t}}\Theta_0(s)ds\int_0^{4R}V(\rho) Z(\rho)\rho^5 e^{-\frac{\la^2\rho^2}{4t}} d\rho\\
    =&~\left[C_1+O(R^{-2})+O(\la^2R^2/t)\right]\int_0^{\sqrt t}\frac{s^5}{64t^3} e^{-\frac{s^2}{4t}}\Theta_0(s)ds,
\end{align*}
where $C_1=\int_0^\infty V(\rho) Z(\rho)\rho^5 d\rho=4$. Furthermore, we have 
\begin{align*}
    I_2=&~\left[C_1+O(R^{-2})+O(\la^2R^2/t)\right]\int_{\sqrt t}^{\frac{t}{2\la R}}\frac{s^5}{64t^3} e^{-\frac{s^2}{4t}}\Theta_0(s)ds, \\
    I_3=&~\int_{\frac{t}{2\la R}}^\infty \frac{s^5}{64t^3} e^{-\frac{s^2}{4t}}\Theta_0(s)ds \int_0^{\frac{2t}{\la s}} V(\rho) Z(\rho)\rho^5 e^{-\frac{\la^2\rho^2}{4t}} d\rho\\
    =&~\left(\int_{\frac{t}{2\la R}}^{\frac{2t}{\la}}+\int_{\frac{2t}{\la}}^\infty\right) \frac{s^5}{64t^3} e^{-\frac{s^2}{4t}}\Theta_0(s)ds \int_0^{\frac{2t}{\la s}} V(\rho) Z(\rho)\rho^5 e^{-\frac{\la^2\rho^2}{4t}} d\rho\\
    =&~  C_1 \int_{\frac{t}{2\la R}}^{\frac{2t}{\la}}\frac{s^5}{64t^3} e^{-\frac{s^2}{4t}}\Theta_0(s) ds+\int_{\frac{2t}{\la}}^\infty \frac{s^5}{64t^3} e^{-\frac{s^2}{4t}}\Theta_0(s) \frac{256t^6}{\la^6 s^6}\frac{1}{(1+\frac{4t^2}{\la^2 s^2})^3} ds,\\
    I_4=&~\la^{-5/2}\frac{\sqrt{2}}{\sqrt{t}}\int_{\frac{t}{2\la R}}^\infty s^{5/2}\Theta_0(s) ds \int_{\frac{2t}{\la s}}^{4R} V(\rho) Z(\rho)\rho^{5/2}e^{-\frac{(\la\rho-s)^2}{4t}} d\rho,\\
    I_5 = &~\int_0^{4R}V(\rho)Z(\rho)\rho^5 e^{-\frac{\la^2\rho^2}{4t}}d\rho\int_0^{\frac{2t}{\la\rho}} O\left(\frac{\lambda^2\rho^2s^7}{254t^5}\right)\Theta_0(s) e^{-\frac{s^2}{4t}}ds.
\end{align*}
Now from the assumption that $\Theta_0(r)$ decays like
$$
|\Theta_0(r)|\sim r^{-2}\log^{-a} r ~\mbox{ as }~r\to+\infty,\quad 0<a<1.
$$
One estimates as follows,
\begin{align*}
    |I_3|\lesssim&~t^{-1}(\log t)^{-a}e^{-\frac{t}{\la^2 R^2}}(\frac{t}{\la^2 R^2}+1)+\la^{\epsilon}t^{-1-\epsilon/2}(\log t)^{-a},\\
    |I_4|\lesssim&~ \la^{-5/2}\frac1{\sqrt{2t}}\int_{\frac{t}{2\la R}}^\infty s^{5/2}|\Theta_0(s)| e^{-\frac{s^2}{4t}} ds\lesssim t^{-1-\epsilon/2}(\log t)^{-a} \la^{\epsilon}R^{5/2+\epsilon},\\
    |I_5|&~\lesssim \int_0^{4R}V(\rho)Z(\rho)\rho^5 e^{-\frac{\la^2\rho^2}{4t}}d\rho\int_0^{\frac{2t}{\la\rho}} \frac{s^7}{t^5}\Theta_0(s) e^{-\frac{s^2}{4t}}ds\lesssim \frac{\lambda^2R^2}{t^2}
\end{align*}
for $\epsilon>0$ small. Above estimates together with \eqref{orthog-214214} therefore imply
\begin{align}
&\log\la(t)\notag\\
&~= -\frac{3}{32}\int_{t_0}^t \frac1{\eta^3}d\eta\int_0^{\sqrt{\eta}}s^5 e^{-\frac{s^2}{4\eta}}\Theta_0(s)ds - \frac{3}{2} \int_{t_0}^t \sum_{i=2}^5I_i(\eta)d\eta  +\log\la(t_0)\notag\\
&~=-\frac{3}{32}\int_0^{\sqrt{t_0}} s^5\Theta_0(s) ds\int_{t_0}^t \frac{1}{\eta^3} e^{-\frac{s^2}{4\eta}} d\eta-\frac{3}{32}\int_{\sqrt{t_0}}^{\sqrt t} s^5\Theta_0(s) ds\int_{s^2}^t \frac{1}{\eta^3} e^{-\frac{s^2}{4\eta}} d\eta\label{fubi-exchange}\\
&~\quad - \frac{3}{2} \int_{t_0}^t \sum_{i=2}^5I_i(\eta)d\eta +\log\la(t_0)\notag\\
&~= -\frac{3}{32}\int_0^{\sqrt{t_0}} s^5\Theta_0(s) ds\int_{t_0}^t \frac{1}{\eta^3} e^{-\frac{s^2}{4\eta}} d\eta  -\frac{3}{8}\int_{\sqrt{t_0}}^{\sqrt{t}}s\Theta_0(s)e^{-\frac{s^2}{4 t}} \left(\frac{s^2}{t}+4\right)ds\notag\\
&~\quad + \frac{15}{8}e^{-\frac{1}{4}}\int_{M\sqrt{t_0}}^{M\sqrt{t}}s\Theta_0(s)ds - \frac{3}{2} \int_{t_0}^t \sum_{i=2}^5I_i(\eta)d\eta + \log\la(t_0)\notag\\
&~= -\frac{3}{8}\int_0^{\sqrt{t_0}} s\Theta_0(s)\left(e^{-\frac{s^2}{4 t}} \left(\frac{s^2}{t}+4\right)-e^{-\frac{s^2}{4t_0}}\left(\frac{s^2}{t_0} + 4\right)\right)ds  -\frac{3}{8}\int_{\sqrt{t_0}}^{\sqrt{t}}s\Theta_0(s)e^{-\frac{s^2}{4 t}} \left(\frac{s^2}{t}+4\right)ds\notag\\
&~\quad + \frac{15}{8}e^{-\frac{1}{4}}\int_{\sqrt{t_0}}^{\sqrt{t}}s\Theta_0(s)ds - \frac{3}{2} \int_{t_0}^t \sum_{i=2}^5I_i(\eta)d\eta + \log\la(t_0)\notag\\
&~= -\frac{3}{8}\int_{0}^{\sqrt{t}}s\Theta_0(s)e^{-\frac{s^2}{4 t}} \left(\frac{s^2}{t}+4\right)ds + \frac{3}{8}\int_0^{\sqrt{t_0}} s\Theta_0(s)e^{-\frac{s^2}{4t_0}}\left(\frac{s^2}{t_0} + 4\right)ds\notag\\
&~\quad + \frac{15}{8}e^{-\frac{1}{4}}\int_{\sqrt{t_0}}^{\sqrt{t}}s\Theta_0(s)ds - \frac{3}{2} \int_{t_0}^t \sum_{i=2}^5I_i(\eta)d\eta + \log\la(t_0).\notag
\end{align}
Here we have used the integral identities,
$$
\int_{s^2}^t \frac{1}{\eta^3} e^{-\frac{s^2}{4\eta}} d\eta = \frac{4 \left(e^{-\frac{s^2}{4 t}} \left(\frac{s^2}{t}+4\right)-5e^{-\frac{1}{4}}\right)}{s^4}
$$
and
$$
\int_{t_0}^t \frac{1}{\eta^3} e^{-\frac{s^2}{4\eta}} d\eta = \frac{4 \left(e^{-\frac{s^2}{4 t}} \left(\frac{s^2}{t}+4\right)-\frac{e^{-\frac{s^2}{4t_0}}\left(4 t_0+s^2\right)}{t_0}\right)}{s^4}.
$$
By decomposing the term $\int_0^{\sqrt{t}} s\Theta_0(s)e^{-\frac{s^2}{4 t}}ds$ as 
\begin{align*}
\int_0^{\sqrt{t}} s\Theta_0(s)e^{-\frac{s^2}{4 t}}ds & = \int_0^{\sqrt{t}} s\Theta_0(s)ds + \int_0^{\sqrt{t}} s\Theta_0(s)\left(e^{-\frac{s^2}{4 t}} - 1\right)ds\\
& = \int_{\sqrt{t_0}}^{\sqrt{t}} s\Theta_0(s)ds + \int_0^{\sqrt{t_0}} s\Theta_0(s)ds + \int_0^{\sqrt{t}} s\Theta_0(s)\left(e^{-\frac{s^2}{4 t}} - 1\right)ds,
\end{align*}
we have
\begin{align*}
&\log\la(t)\\
&~= -\frac{3}{2}\int_{\sqrt{t_0}}^{\sqrt{t}} s\Theta_0(s)ds -\frac{3}{2}\int_0^{\sqrt{t_0}} s\Theta_0(s)ds -\frac{3}{2}\int_0^{\sqrt{t}} s\Theta_0(s)\left(e^{-\frac{s^2}{4 t}} - 1\right)ds\\
&~\quad -\frac{3}{8}\int_{0}^{\sqrt{t}}s\Theta_0(s)e^{-\frac{s^2}{4 t}} \frac{s^2}{t}ds + \frac{3}{8}\int_0^{\sqrt{t_0}} s\Theta_0(s)e^{-\frac{s^2}{4t_0}}\left(\frac{s^2}{t_0} + 4\right)ds\\
&~\quad + \frac{15}{8}e^{-\frac{1}{4}}\int_{\sqrt{t_0}}^{\sqrt{t}}s\Theta_0(s)ds - \frac{3}{2} \int_{t_0}^t \sum_{i=2}^5I_i(\eta)d\eta + \log\la(t_0)\\
&~= -\frac{3}{2}\int_{\sqrt{t_0}}^{\sqrt{t}} s\Theta_0(s)ds + \frac{15}{8}e^{-\frac{1}{4}}\int_{\sqrt{t_0}}^{\sqrt{t}}s\Theta_0(s)ds- \frac{3}{2} \int_{t_0}^t I_2(\eta)d\eta\\
&~\quad -\frac{3}{2}\int_0^{\sqrt{t_0}} s\Theta_0(s)ds -\frac{3}{2}\int_0^{\sqrt{t}} s\Theta_0(s)\left(e^{-\frac{s^2}{4 t}} - 1\right)ds\\
&~\quad -\frac{3}{8}\int_{0}^{\sqrt{t}}s\Theta_0(s)e^{-\frac{s^2}{4 t}} \frac{s^2}{t}ds + \frac{3}{8}\int_0^{\sqrt{t_0}} s\Theta_0(s)e^{-\frac{s^2}{4t_0}}\left(\frac{s^2}{t_0} + 4\right)ds\\
&~\quad  - \frac{3}{2} \int_{t_0}^t \sum_{i=3}^5I_i(\eta)d\eta + \log\la(t_0).
\end{align*}
Observe that we have 
$$
\left|\int_0^{\sqrt{t}} s\Theta_0(s)\left(e^{-\frac{s^2}{4 t}} - 1\right)ds\right|\lesssim C,\quad \left|\int_0^{\sqrt{t}} s\Theta_0(s)e^{-\frac{s^2}{4 t}} \frac{s^2}{t}ds\right|\lesssim C,\quad \left|\int_{t_0}^t \sum_{i=3}^5I_i(\eta)d\eta\right|\lesssim C,
$$
from which we know that
\begin{align*}
\log\la(t)= -\frac{3}{2}\int_{\sqrt{t_0}}^{\sqrt{t}} s\Theta_0(s)ds + \frac{15}{8}e^{-\frac{1}{4}}\int_{\sqrt{t_0}}^{\sqrt{t}}s\Theta_0(s)ds- \frac{3}{2} \int_{t_0}^t I_2(\eta)d\eta + \tilde C_1(t_0, t).
\end{align*}
Here 
\begin{align*}
\tilde C_1(t_0, t):& = -\frac{3}{2}\int_0^{\sqrt{t_0}} s\Theta_0(s)ds -\frac{3}{2}\int_0^{\sqrt{t}} s\Theta_0(s)\left(e^{-\frac{s^2}{4 t}} - 1\right)ds -\frac{3}{8}\int_{0}^{\sqrt{t}}s\Theta_0(s)e^{-\frac{s^2}{4 t}} \frac{s^2}{t}ds \\
&~\quad + \frac{3}{8}\int_0^{\sqrt{t_0}} s\Theta_0(s)e^{-\frac{s^2}{4t_0}}\left(\frac{s^2}{t_0} + 4\right)ds - \frac{3}{2} \int_{t_0}^t \sum_{i=3}^5I_i(\eta)d\eta
\end{align*}
is a smooth bounded function of $t\in (t_0, \infty)$.

Now we deal with the term $\int_{t_0}^t I_2(\eta)d\eta$. Write
$$
I_2=\left[C_1+O(R^{-2})+O(\la^2R^2/t)\right]\left(\int_{\sqrt t}^{t^{\frac{3}{4}}}\frac{s^5}{64t^3} e^{-\frac{s^2}{4t}}\Theta_0(s)ds + \int_{t^{\frac{3}{4}}}^{\frac{t}{2\la R}}\frac{s^5}{64t^3} e^{-\frac{s^2}{4t}}\Theta_0(s)ds\right) := I_{21} + I_{22}.
$$
Firstly, using the Fubini theorem as in (\ref{fubi-exchange}), it holds that 
\begin{align*}
&\int_{t_0}^t \int_{\sqrt \eta}^{\eta^{\frac{3}{4}}}\frac{s^5}{64\eta^3} e^{-\frac{s^2}{4\eta}}\Theta_0(s)dsd\eta\\
&= \frac{1}{16}\int_{\sqrt{t_0}}^{t_0^{\frac{3}{4}}}s\Theta_0(s)\left(5e^{-\frac{1}{4}}-e^{-\frac{s^2}{4t_0}}\left(4+\frac{s^2}{t_0}\right)\right)ds + \frac{1}{16}\int_{t_0^{\frac{3}{4}}}^{\sqrt{t}}s\Theta_0(s)\left(5e^{-\frac{1}{4}}-e^{-\frac{s^{\frac{2}{3}}}{4}}\left(4+s^{\frac{2}{3}}\right)\right)ds\\
&\quad + \frac{1}{16}\int_{\sqrt{t}}^{t^{\frac{3}{4}}}s\Theta_0(s)\left(e^{-\frac{s^2}{4 t}} \left(\frac{s^2}{t}+4\right)-e^{-\frac{s^{\frac{2}{3}}}{4}}\left(4+s^{\frac{2}{3}}\right)\right)ds\\
& = e^{-\frac{1}{4}}\frac{5}{16}\int_{\sqrt{t_0}}^{\sqrt{t}}s\Theta_0(s)ds + \frac{1}{16}\int_{\sqrt{t}}^{t^{\frac{3}{4}}}s\Theta_0(s)e^{-\frac{s^2}{4 t}} \left(\frac{s^2}{t}+4\right)ds\\
&\quad - \frac{1}{16}\int_{t_0^{\frac{3}{4}}}^{t^{\frac{3}{4}}}s\Theta_0(s)e^{-\frac{s^{\frac{2}{3}}}{4}}\left(4+s^{\frac{2}{3}}\right)ds -\frac{1}{16}\int_{\sqrt{t_0}}^{t_0^{\frac{3}{4}}}s\Theta_0(s)e^{-\frac{s^2}{4t_0}}\left(4+\frac{s^2}{t_0}\right)ds.
\end{align*}
Note that the terms 
\begin{align*}
&\int_{t_0^{\frac{3}{4}}}^{t^{\frac{3}{4}}}s\Theta_0(s)e^{-\frac{s^{\frac{2}{3}}}{4}}\left(4+s^{\frac{2}{3}}\right)ds +\int_{\sqrt{t_0}}^{t_0^{\frac{3}{4}}}s\Theta_0(s)e^{-\frac{s^2}{4t_0}}\left(4+\frac{s^2}{t_0}\right)ds
\end{align*}
are bounded functions of $t$ and $t_0$, and 
$$
\left|\int_{\sqrt{t}}^{t^{\frac{3}{4}}}s\Theta_0(s)e^{-\frac{s^2}{4 t}} \left(\frac{s^2}{t}+4\right)ds\right|\lesssim C.
$$
Secondly, we have 
\begin{align*}
&\left|I_{22}\right| = \left[C_1+O(R^{-2})+O(\la^2R^2/t)\right]\left|\int_{t^{\frac{3}{4}}}^{\frac{t}{2\la R}}\frac{s^5}{64t^3} e^{-\frac{s^2}{4t}}\Theta_0(s)ds\right|\\
& \lesssim \int_{t^{\frac{3}{4}}}^{\frac{t}{2\la R}}\frac{s^3}{t^3} e^{-\frac{s^2}{4t}}\frac{1}{(\log s)^a}ds \lesssim \frac{1}{t}\int_{t^{\frac{1}{4}}}^{\frac{\sqrt{t}}{2\lambda R}}s^3 e^{-\frac{s^2}{4}}\frac{1}{(\log s + \frac{1}{2}\log t)^a}ds\\ &\lesssim \frac{1}{t}\frac{1}{(\log t)^a}\int_{t^{\frac{1}{4}}}^{\frac{\sqrt{t}}{2\lambda R}}s^3 e^{-\frac{s^2}{4}}ds\lesssim \frac{1}{t^2(\log t)^{a}},
\end{align*}
and
$$
\left|\int_{t_0}^t I_{22}(\eta)d\eta\right|\lesssim \frac{1}{t_0}.
$$
Therefore, one has 
\begin{align*}
\int_{t_0}^t I_2(\eta)d\eta = \frac{5}{4}e^{-\frac{1}{4}}\int_{\sqrt{t_0}}^{\sqrt{t}}s\Theta_0(s)ds + \tilde C_2(t_0, t)
\end{align*}
with $\tilde C_2(t_0, t)$ being a smooth bounded function of $t\in (t_0, \infty)$.

Combine all the estimates above, we obtain that 
\begin{align*}
\log\la(t)= \left(-\frac{3}{2}+O\left(\frac{1}{\log t}\right)\right)\int_{\sqrt{t_0}}^{\sqrt{t}} s\Theta_0(s)ds + C(t_0, t).
\end{align*}
Here $C(t_0, t): = \tilde C_1(t_0, t)-\frac{3}{2}\tilde C_2(t_0, t)$ is a smooth bounded function of $t\in (t_0, \infty)$. This concludes the proof.
\end{proof}
Now we claim that 
\begin{lemma}\label{choice-of-Theta_0}
There exists sign-changing initial data $\Theta_0$ with $$\Theta_0(r)\sim r^{-2}\log^{-a} r ~\mbox{ as }~r\to+\infty,\quad a\in (0, 1)$$
such that the solution of (\ref{orthog-214214}) satisfies
$$ 
\liminf_{t\to +\infty}\lambda(t) = 0\quad \text{and}\quad \limsup_{t\to +\infty}\lambda(t) = \infty.
$$
\end{lemma}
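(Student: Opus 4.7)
The plan is to exhibit the explicit sign-changing choice already signaled in Remark 2.3, namely
\[
\Theta_0(r)=\frac{(1-a)\cos(\log\log(2+r^2))-\sin(\log\log(2+r^2))}{(2+r^2)\,[\log(2+r^2)]^{a}},\qquad 0<a<1,
\]
and to show that the function $s\Theta_0(s)$ has an elementary antiderivative whose amplitude grows like $(\log s)^{1-a}$ while oscillating with the slowly varying frequency $\log\log s$. Plugging this into the global scaling law \eqref{GNK-law} will make $\log\lambda(t)$ oscillate unboundedly in both directions, giving the conclusion.

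\textbf{Step 1: asymptotics and sign changes.} Writing $g(r):=\log(2+r^{2})$, the numerator $(1-a)\cos(\log g(r))-\sin(\log g(r))$ has amplitude $\sqrt{(1-a)^{2}+1}>0$ and changes sign on any interval on which $\log g(r)$ traverses a length greater than $\pi$. Since $\log g(r)=\log\log(2+r^{2})\to\infty$ as $r\to\infty$, this produces infinitely many sign changes tending to infinity, and the size estimate $|\Theta_0(r)|\sim r^{-2}(\log r)^{-a}$ as $r\to\infty$ follows directly from the defining formula. Smoothness is immediate.

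\textbf{Step 2: exact antiderivative of $s\Theta_0(s)$.} I would compute, with $g(s)=\log(2+s^{2})$ and $g'(s)=2s/(2+s^{2})$,
\[
\frac{d}{ds}\Bigl[\cos(\log g(s))\,g(s)^{1-a}\Bigr]
=g'(s)\,g(s)^{-a}\bigl[(1-a)\cos(\log g(s))-\sin(\log g(s))\bigr]
=2s\,\Theta_0(s).
\]
Therefore
\[
\int_{\sqrt{t_{0}}}^{\sqrt{t}}s\,\Theta_0(s)\,ds
=\tfrac{1}{2}\bigl[\log(2+t)\bigr]^{1-a}\cos\!\bigl(\log\log(2+t)\bigr)+\mathcal{C}(t_{0}),
\]
where $\mathcal{C}(t_{0})$ is the boundary contribution at $s=\sqrt{t_{0}}$. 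Because $1-a>0$, the right-hand side oscillates between $\pm\tfrac{1}{2}(\log t)^{1-a}(1+o(1))$; in particular, along the sequences $t_{k}^{\pm}$ satisfying $\log\log(2+t_{k}^{\pm})=k\pi$ (with sign determined by parity), the integral tends to $\pm\infty$.

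\textbf{Step 3: inserting into the scaling law and concluding.} Applying Proposition~3 (the law \eqref{GNK-law}) gives
\[
\log\lambda(t)=\Bigl(-\tfrac{3}{2}+O\bigl((\log t)^{-1}\bigr)\Bigr)\cdot\tfrac{1}{2}(\log(2+t))^{1-a}\cos(\log\log(2+t))+\widetilde C(t_{0},t),
\]
where $\widetilde C(t_{0},t)$ remains bounded on $(t_{0},\infty)$. Since the factor $(\log(2+t))^{1-a}$ grows unboundedly while $\cos(\log\log(2+t))$ takes every value in $[-1,1]$ infinitely often as $t\to\infty$, the dominant oscillatory term drives $\log\lambda(t)\to+\infty$ along one subsequence and $\log\lambda(t)\to-\infty$ along another. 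Exponentiating yields $\limsup_{t\to\infty}\lambda(t)=+\infty$ and $\liminf_{t\to\infty}\lambda(t)=0$.

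\textbf{Main obstacle.} The conceptual content is entirely concentrated in Step~2: the specific linear combination $(1-a)\cos-\sin$ in the numerator is engineered so that $2s\Theta_0(s)$ is a perfect derivative of $\cos(\log g(s))g(s)^{1-a}$, producing a closed-form antiderivative whose amplitude $(\log t)^{1-a}$ is precisely large enough (using $0<a<1$) to overcome the bounded remainder $\widetilde C(t_{0},t)$ and the $O(1/\log t)$ correction in \eqref{GNK-law}. Once this algebraic identity is spotted, the rest is bookkeeping; no delicate analysis beyond the scaling law is required.
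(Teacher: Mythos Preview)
Your proof is correct and follows essentially the same approach as the paper: the same explicit choice of $\Theta_0$, the same recognition that $2s\Theta_0(s)$ is the exact derivative of $[\log(2+s^2)]^{1-a}\cos(\log\log(2+s^2))$, and the same conclusion via the scaling law \eqref{GNK-law}. The only cosmetic difference is that the paper first isolates the main term $\lambda_0$ and proves the oscillation for it, whereas you work directly with the full law including the $O((\log t)^{-1})$ and bounded $C(t_0,t)$ corrections; your treatment of these lower-order terms is correct and arguably cleaner.
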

\begin{proof}
Let us set
\begin{align*}
\Theta_0(r) & = (1-a)\frac{\cos(\log\log(2+r^2))}{(2+r^2)\left[\log(2+r^2)\right]^a} - \frac{\sin(\log\log(2+r^2))}{(2+r^2)\left[\log(2+r^2)\right]^a}\\ &= \frac{(1-a)\cos(\log\log(2+r^2))-\sin(\log\log(2+r^2))}{(2+r^2)\left[\log(2+r^2)\right]^a}.
\end{align*}
with $a \in (0, 1)$. This function is sign-changing and satisfies $$\Theta_0(r)\sim r^{-2}\log^{-a} r ~\mbox{ as }~r\to+\infty.$$
From (\ref{GNK-law}), we know that the main term $\lambda_0$ of $\lambda$ can be computed as 
\begin{align*}
&\log\la_0(t) \\
& =-\frac{3}{2}\int_{\sqrt{t_0}}^{\sqrt{t}}s\Theta_0(s)ds \\
&=-\frac{3}{4}\int_{t_0}^{t}\left((1-a)\frac{\cos(\log\log(2+s))}{(2+s)\left[\log(2+s)\right]^a} - \frac{\sin(\log\log(2+s))}{(2+s)\left[\log(2+s)\right]^{a}}\right)d s\\
&= -\frac{3}{4}\int_{t_0}^{t}d \left[\left(\log(2+s)\right)^{1-a}\cos(\log\log(2+s))\right]\\
&= -\frac{3}{4}\left(\log(2+t)\right)^{1-a}\cos(\log\log(2+t)) + \frac{3}{4}\left(\log(2+t_0)\right)^{1-a}\cos(\log\log(2+t_0)), 
\end{align*}
therefore, we get 
\begin{align}\label{main-term-of-lambda}
&\la_0(t)\notag\\ 
& = \exp\left(-\frac{3}{4}\left(\log(2+t)\right)^{1-a}\cos(\log\log(2+t)) + \frac{3}{4}\left(\log(2+t_0)\right)^{1-a}\cos(\log\log(2+t_0))\right)\\ 
& = C_{t_0}\exp\left(-\frac{3}{4}\left(\log(2+t)\right)^{1-a}\cos(\log\log(2+t))\right),\notag
\end{align}
with $C_{t_0} = \exp\left(\frac{3}{4}\left(\log(2+t_0)\right)^{1-a}\cos(\log\log(2+t_0))\right)$. This shows that $\lambda_0(t) > 0$,
$$ 
\liminf_{t\to +\infty}\lambda_0(t) = 0\quad \text{and}\quad \limsup_{t\to +\infty}\lambda_0(t) = \infty
$$
for $a \in (0, 1)$. From (\ref{GNK-law}), we also know that 
\begin{align*}
\left|\frac{\dot\lambda_0(t)}{\lambda_0(t)}\right| = \left|-\frac{3}{4}\Theta_0(\sqrt{t})\right|\sim t^{-1}(\log t)^{-a}
\end{align*}
as $t\to +\infty$.
\end{proof}

\begin{remark}
To solve the inner problem \eqref{eqn-inner}, we use a new time variable as follows,
\begin{equation}\label{time-variable-transformation}
\tau = \tau(t) : = \int_{t_0}^t\lambda^{-2}(s)ds + C t_0\lambda^{-2}(t_0), \quad \tau_0: = \tau(t_0).
\end{equation}
For $\lambda$ with main term defined in (\ref{main-term-of-lambda}), we have $\tau(t)\to +\infty$ as $t\to+\infty$. Indeed, we have 
\begin{align*}
\tau &=\tau(t)\\ & =C_{t_0}^{-2}\int_{t_0}^t \exp\left(\frac{3}{2}\left(\log(2+s)\right)^{1-a}\cos(\log\log(2+s))\right)ds \\
& = C_{t_0}^{-2} \int_{\log\log(2+t_0)}^
{\log\log(2+t)} \exp(\exp(s))\exp(s)\exp\left(\frac{3}{2}\exp((1-a)s)\cos(s)\right)ds \\
&\geq C_{t_0}^{-2} \int_{\cos s < 1/2} \exp(\exp(s))\exp(s)\exp\left(\frac{3}{4}\exp((1-a)s)\right)ds\\
& \notag=  C_{t_0}^{-2} \sum_{k=[\frac{\log\log(2+t_0)}{\pi}]}^{[\frac{\log\log(2+t)}{\pi}]}\int_{-\frac{\pi}{3}+k\pi}^{\frac{\pi}{3}+k\pi}\exp(\exp(s))\exp(s)\exp\left(\frac{3}{4}\exp((1-a)s)\right)ds\\
&\notag \geq C_{t_0}^{-2}\sum_{k=[\frac{\log\log(2+t_0)}{\pi}]}^{[\frac{\log\log(2+t)}{\pi}]}\exp(\exp(-\frac{\pi}{3}+k\pi))\exp(-\frac{\pi}{3}+k\pi)\exp\left(\frac{3}{4}\exp\left((1-a)\left(-\frac{\pi}{3}+k\pi\right)\right)\right)\frac{2\pi}{3}\\
&\to +\infty\quad{ as }\quad t\to +\infty.
\end{align*}
\end{remark}
Now we define 
\begin{equation*}
    \Theta_*(\tau):=\tilde\Theta_*(t(\tau)):=\frac{(\log t)^{-a}}{t}\gtrsim \left|
    \frac{\dot\lambda_0}{\lambda_0}\right|.
\end{equation*}
The RHS of the inner problem \eqref{eqn-inner}, expressed in the $(y,\tau)$ variables, behaves like
    $$
    |H(\rho,\tau)|\lesssim |\Theta_*|\langle \rho\rangle^{-2-a},\quad 0<a<1,
    $$
    so Proposition \ref{Prop-inner} gives
    $$
    \langle\rho\rangle|\phi_\rho(\rho,\tau)|+|\phi(\rho,\tau)|\lesssim \lambda^2|\Theta_*|R^{6-a}\langle \rho\rangle^{-6}.
    $$
We define 
\begin{equation*}
    \begin{aligned}\|\phi\|_{\sharp}:=&~\sup_{\rho,\tau} \lambda^{-2}(\tau) |\Theta_*(\tau)|^{-1} R^{a-6}(\tau)\langle \rho\rangle^{6}\Big(\langle\rho\rangle|\phi_\rho(\rho,\tau)|+|\phi(\rho,\tau)|\Big),\\
    \|\psi\|_\flat:=&~\sup_{r,t}\Big[|\Theta_*(\tau(t))|R^{-a_1}(\tau(t))(\1_{r\leq \sqrt t}+tr^{-2}\1_{r\geq\sqrt t})\Big]^{-1}|\psi(r,t)|
    \end{aligned}
\end{equation*}
for $0<a_1<a$. Let us make assumptions on the parameters:
$$
\quad R = R(t) = (\log t)^{\frac{1}{100}}, \quad 
\lambda(t) = \lambda_0(t) + \lambda_1(t),
$$
with $\lambda_1(t)  = \frac{1}{(\log t)^{\epsilon}}\lambda_0(t)$. Then we have $\la R\ll\sqrt{t}$. The orthogonality condition \eqref{inner-orthogonal-cond} will be fully solved in Section \ref{oscillating-scale}. 

\medskip

\section{The outer problem}\label{outer-problem}
In this section, we consider the outer problem.
\begin{equation}\label{eqn-outer1}
    \begin{aligned}
\psi_t=&~\psi_{rr}+\frac5r\psi_r+ \mathcal G[\psi, \phi, \lambda_1] ~\mbox{ in }~\R_+\times (t_0,\infty),
    \end{aligned}
\end{equation}
where
\begin{equation}\label{eqn-outer1111}
    \begin{aligned}
\mathcal G[\psi, \phi, \lambda_1]=&~2\la^{-2}\pp_r\eta_R\phi_r+\la^{-2}\phi\left(\pp_{rr}\eta_R+\frac5r\pp_r\eta_R\right)+2\la^{-3}\dot\la \eta_R \phi + \la^{-3}\dot\la \eta_R \rho\phi_\rho+(1-\eta_R)\frac{4\dot\la \la^{-3}}{(1+\rho^2)^2}\\
    &~+(1-\eta_R)\mathcal N[\la,\phi,\psi] + \la^{-2}(1-\eta_R)(12U-6\rho^2 U^2)(\psi+\Theta)~\mbox{ in }~\R_+\times (t_0,\infty)
    \end{aligned}
\end{equation}
and 
\begin{equation}\label{def-N1}
    \begin{aligned}
        \mathcal N[\la,\phi,\psi]:=&~6(U_\la+\Theta+\eta_R\la^{-2}\phi+\psi)^2-2r^2(U_\la+\Theta+\eta_R\la^{-2}\phi+\psi)^3-6U_\la^2+2r^2 U_\la^3\\
        &~-(12U_\la-6r^2 U_\la^2)(\Theta+\eta_R\la^{-2}\phi+\psi).
    \end{aligned}
\end{equation}

\begin{proposition}
For $t_0$ sufficiently large, there exists a unique solution $\psi = \psi[\phi, \lambda_1]$ for the outer problem (\ref{eqn-outer1}) satisfying the following estimates
\begin{align}\label{psi-exists}
|\psi|\lesssim |\Theta_*(\tau(t))|R^{-a_1}(\tau(t))\left(\1_{r\leq \sqrt t}+tr^{-2}\1_{r\geq\sqrt t}\right),
\end{align}
\begin{align}\label{psi-gradient}
\|\psi_r(\cdot, t)\|_{L^\infty(\mathbb{R}_+)}\lesssim |\Theta_*(\tau(t))|R^{-a_1}(\tau(t)).
\end{align}
\end{proposition}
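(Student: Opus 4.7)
The plan is to recast the outer problem (\ref{eqn-outer1}) as a fixed-point problem in the Banach space defined by the $\|\cdot\|_\flat$ norm, and solve it by contraction. Since $\partial_t - \partial_{rr} - \frac{5}{r}\partial_r$ is precisely the radial form of the six-dimensional heat operator, I would represent the solution via Duhamel's formula
\begin{equation*}
\psi \;=\; \mathcal{T}^{\mathrm{out}}_6\!\bigl[\mathcal{G}[\psi,\phi,\lambda_1]\bigr],
\end{equation*}
with zero initial data at $t=t_0$. Here $\phi$ is viewed as an external datum satisfying $\|\phi\|_\sharp\le 1$ (supplied by the inner problem via Lemma \ref{Prop-inner}), $\Theta$ is controlled by Lemma \ref{Lemma2.32.3}, and $\lambda_1$ enters through the modulation bound $|\dot\lambda/\lambda|\sim|\Theta_*|\sim t^{-1}(\log t)^{-a}$. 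The goal is to prove that the map above sends a $\|\cdot\|_\flat$-ball of a fixed radius into itself and is a contraction for $t_0$ sufficiently large, so that existence, uniqueness, and the bound (\ref{psi-exists}) follow from the Banach fixed-point theorem.

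Each of the terms appearing in $\mathcal{G}[\psi,\phi,\lambda_1]$ in (\ref{eqn-outer1111}) is then estimated by applying Lemma \ref{linear-outer} with a suitable choice of $(l_1,l_2,v,b)$. The localization pieces $2\lambda^{-2}\partial_r\eta_R\,\phi_r$ and $\lambda^{-2}\phi(\partial_{rr}\eta_R+\tfrac{5}{r}\partial_r\eta_R)$ are supported on the thin annulus $\lambda R\le r\le 2\lambda R$, where the $\|\cdot\|_\sharp$ bound yields $|\phi|+\langle\rho\rangle|\phi_\rho|\lesssim \lambda^2|\Theta_*|R^{-a}\langle\rho\rangle^{-6}$; inserting this into (\ref{RHS-with-upper-bound}) with $l_i\sim\lambda R$ and $b=2$ produces a contribution of order $|\Theta_*|R^{-a}(\mathbf{1}_{r\le\sqrt t}+tr^{-2}\mathbf{1}_{r\ge\sqrt t})$, which sits inside the $\flat$-ball with margin $R^{a_1-a}=o(1)$. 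The inertial terms $\lambda^{-3}\dot\lambda\,\eta_R\phi$, $\lambda^{-3}\dot\lambda\,\eta_R\rho\phi_\rho$, and the modulation tail $(1-\eta_R)\frac{4\dot\lambda\lambda^{-3}}{(1+\rho^2)^2}$ are treated analogously with $v\sim|\dot\lambda/\lambda|\sim|\Theta_*|$. Finally, the linear coupling $\lambda^{-2}(1-\eta_R)(12U-6\rho^2U^2)(\psi+\Theta)$ reduces, by a direct computation, to a potential bounded by $\frac{24\lambda^2}{(\lambda^2+r^2)^2}\mathbf{1}_{r\ge\lambda R}\lesssim R^{-2}r^{-2}\mathbf{1}_{r\ge\lambda R}$: the $\Theta$ piece is absorbed via Lemma \ref{Lemma2.32.3}, while the $\psi$ piece furnishes the crucial $R^{-2}$ smallness on $\mathrm{supp}(1-\eta_R)$ which ultimately closes the self-map.

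For the nonlinear remainder $\mathcal{N}[\lambda,\phi,\psi]$ I would expand (\ref{def-N1}) around $U_\lambda$ so that every surviving term carries at least two factors drawn from $\{\Theta,\,\eta_R\lambda^{-2}\phi,\,\psi\}$, multiplied by $U_\lambda$ or $r^2U_\lambda^2$; combining the $\sharp$ and $\flat$ bounds with the pointwise decay of $\Theta$ furnished by Lemma \ref{Lemma2.32.3}, one obtains an estimate for $(1-\eta_R)\mathcal{N}$ that is smaller than the linear contributions by an extra factor of order $|\Theta_*|$. The same expansion applied to the difference $\mathcal{G}[\psi_1,\phi,\lambda_1]-\mathcal{G}[\psi_2,\phi,\lambda_1]$ delivers a Lipschitz constant $o(1)$ as $t_0\to\infty$, from which contraction and uniqueness follow. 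The pointwise gradient bound (\ref{psi-gradient}) is obtained by differentiating Duhamel's formula in $r$; the spatial derivative of the 6D heat kernel contributes an integrable $(t-s)^{-1/2}$ factor, and since (\ref{psi-gradient}) carries no spatial weight, integration against the source bounds above yields the claim. The main obstacle is the self-coupling through $\lambda^{-2}(1-\eta_R)(12U-6\rho^2U^2)\psi$: since this potential decays only like $R^{-2}r^{-2}$ on its support, contraction is not automatic from the global smallness of $|\Theta_*|$, and one must verify carefully that the $R^{-2}$ gain, together with the precise matching between the $\flat$-weight $\mathbf{1}_{r\le\sqrt t}+tr^{-2}\mathbf{1}_{r\ge\sqrt t}$ and the outputs of Lemma \ref{linear-outer}, produces a self-map factor strictly below $1$.
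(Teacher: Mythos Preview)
Your proposal is correct and follows essentially the same route as the paper: represent $\psi$ by the six-dimensional Duhamel formula, split $\mathcal G$ into the cut-off pieces, the modulation tail, the linear potential coupling, and the nonlinear remainder, bound each via Lemma~\ref{linear-outer}, and close by contraction in the $\|\cdot\|_\flat$-ball using the $R^{-2}=(\log t)^{-\epsilon}$ smallness on $\mathrm{supp}(1-\eta_R)$. Two minor remarks: in your description of $\mathcal N$, the expansion actually gives $6v^2-6r^2U_\lambda v^2-2r^2v^3$ with $v=\Theta+\eta_R\lambda^{-2}\phi+\psi$, so not every term carries a $U_\lambda$ factor (though this does not affect the estimate, since all terms are at least quadratic in small quantities); and for the gradient bound the paper invokes scaling plus standard interior parabolic regularity rather than differentiating Duhamel directly, which is a slightly cleaner way to avoid tracking the extra $(t-s)^{-1/2}$ across all the source pieces.
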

\begin{proof}
In \eqref{eqn-outer1}, one can estimate the right hand side terms of (\ref{eqn-outer1}) as follows,
\begin{align*}
g_1 &~:=\left|2\la^{-2}\pp_r\eta_R\phi_r+\la^{-2}\phi\left(\pp_{rr}\eta_R+\frac5r\pp_r\eta_R\right)+2\la^{-3}\dot\la \eta_R \phi+\la^{-3}\dot\la \eta_R \rho\phi_\rho\right|\\
&~\lesssim\left(\la^{-2}|\Theta_*|R^{4-a}\langle \rho\rangle^{-6}\1_{\{\rho\sim R\}}+|\Theta_*|^2 R^{6-a}\langle \rho\rangle^{-6}\1_{\{\rho\lesssim R\}}\right)\|\phi\|_{\sharp},\\
&~g_2:=\left|(1-\eta_R)\frac{4\dot\la \la^{-3}}{(1+\rho^2)^2}\right|\lesssim \la^{-2}|\Theta_*|\langle\rho\rangle^{-4}\1_{\{\rho\gtrsim R\}},\\
g_3&~:=|(1-\eta_R)\mathcal N[\la,\phi,\psi]|\\
&~=(1-\eta_R)\Big|6(\Theta+\la^{-2}\eta_R\phi+\psi)^2-2r^2[3U_\la(\Theta+\la^{-2}\eta_R\phi+\psi)^2+(\Theta+\la^{-2}\eta_R\phi+\psi)^3]\Big|\\
&~\lesssim |\Theta_*|^2 R^{12-2a}\langle \rho\rangle^{-12}\|\phi\|_{\sharp}^2\1_{\{\rho\sim R\}}+|\Theta|^2\1_{\{\rho\gtrsim R\}}\\
&\quad +\Big[|\Theta_*|^2 R^{-2a_1}(\1_{r\leq \sqrt t}+tr^{-2}\1_{r\geq\sqrt t})^2\Big]\|\psi\|^2_\flat\1_{\{\rho\gtrsim R\}}\\
&\quad + \la^{2}|\Theta_*|^3 R^{18-3a}\langle \rho\rangle^{-16}\|\phi\|_{\sharp}^3\1_{\{\rho\sim R\}}+\la^{2}\rho^2|\Theta|^3\1_{\{\rho\gtrsim R\}}\\
&\quad +\la^{2}\rho^2\Big[|\Theta_*|^3 R^{-3a_1}(\1_{r\leq \sqrt t}+tr^{-2}\1_{r\geq\sqrt t})^3\Big]\|\psi\|^3_\flat\1_{\{\rho\gtrsim R\}},\\
g_4 &~:=|\la^{-2}(1-\eta_R)(12 U-6\rho^2 U^2)(\psi+\Theta)| \\
&~\lesssim \la^{-2}|\Theta_*|\langle\rho\rangle^{-4}R^{-a_1}(\1_{r\leq \sqrt t}+tr^{-2}\1_{r\geq\sqrt t})\1_{\{\rho\gtrsim R\}}\|\psi\|_\flat + \la^{-2}|\Theta|\langle\rho\rangle^{-4}\1_{\{\rho\gtrsim R\}}.
\end{align*}
In the following, we use the case $n = 6$, $2< b < 6$ and $a\in [0, 1)$ of Lemma \ref{linear-outer},
\begin{equation*}
		\begin{aligned}
			&\TT_6^{{\rm out}}\left[ v(t) |x|^{-b}(\log |x|)^{-a} \1_{\{ l_1(t) \le |x| \le l_2(t) \}}\right]\\
			&\lesssim 
			t^{-3}
			e^
			{-\frac{|x|^2}{16 t } } 
			\int_{\frac{t_0}{2}}^{\frac{t}{2} }   v(s)
				l_2^{6-b}(s)(\log l_2(s))^{-a}
				d s 
			\\
			&~\quad +
			\sup\limits_{ t_1 \in [t/2,t] } v(t_1)
            \begin{cases}
			l_1^{2-b}(t)(\log l_1(t))^{-a}
				&
				\mbox{ \ for \ }  r \le l_1(t)
				\\
					r^{2-b}(\log r)^{-a}
				&
				\mbox{ \ for \ }
				l_1(t) < r \le l_2(t)
				\\
				r^{-4} 
				e^{-\frac{r^2}{16 t}}
					l_2^{6-b}(t) (\log l_2(t))^{-a}
				&
				\mbox{ \ for \ }
				r > l_2(t),
			\end{cases} 
		\end{aligned}
	\end{equation*}

    \begin{equation*}
		\begin{aligned}
			&
			\TT_{6}^{{\rm out}}\left[v(t) |x|^{-b}(\log |x|)^{-a} \1_{\{ |x|\ge t^{\frac 12}\}}\right]\\
			&\lesssim  
			\begin{cases}
				t^{-3}
				\int_{t_{0}/2}^{t/2}
				v(s)
					t^{\frac{6-b}{2}}(\log t)^{-a}
				d s  
				  + t^{1-\frac{b}{2}}(\log t)^{-a} \sup\limits_{ t_1 \in [t/2,t] } v(t_1) 
				&
				\mbox{ \ if \ } |x|\le t^{\frac 12}
				\\
				|x|^{-b} (\log |x|)^{-a}
				\left( t \sup\limits_{ t_1 \in [t/2,t] } v(t_1) +	\int_{t_{0}/2}^{t/2}
				v(s) d s \right) 
				&
				\mbox{ \ if \ } |x| > t^{\frac 12}.
			\end{cases} 
		\end{aligned}
	\end{equation*}
Furthermore, when $n = 6$, $b = 6$ and $a\in [0, 1)$, one has
\begin{equation*}
		\begin{aligned}
			&
			\TT_{n}^{out}\left[v(t) |x|^{-6} (\log |x|)^{-a}\1_{\{ r\ge t^{\frac 12}\}}\right]\lesssim
			\\
			&
			\begin{cases}
				t^{-3}
				\int_{t_{0}/2}^{t/2}
				v(s)(\log t)^{1-a}d s
				+ t^{-2}(\log t)^{-a} \sup\limits_{ t_1 \in [t/2,t] } v(t_1)
				\mbox{ \ if \ } |x|\le t^{\frac 12}
				\\ 
				  |x|^{-6}(\log t)^{-a}
				\left( t \sup\limits_{ t_1 \in [t/2,t] } v(t_1) +	\int_{t_{0}/2}^{t/2}
				v(s) d s \right)  +
				t^{-3} e^{-\frac{|x|^2}{16 t} }
				\int_{t_{0}/2}^{t/2}
				v(s)
				(\log(|x|))^{1-a}d s
				\mbox{ \ if \ } |x| > t^{\frac 12}.
			\end{cases}
		\end{aligned}
	\end{equation*}
Term by term, using Lemma \ref{linear-outer}, we estimate as follows.\\
\noindent{\bf Claim 1}:
\begin{align*}
&~\TT_6^{{\rm out}}[g_1]\\
&~\lesssim\|\phi\|_\sharp\TT_6^{{\rm out}}\Big[\left(\la^{-2}|\Theta_*|R^{4-a}\langle \rho\rangle^{-6}\1_{\{\rho\sim R\}}+|\Theta_*|^2 R^{6-a}\langle \rho\rangle^{-6}\1_{\{\rho\lesssim R\}}\right)\Big]\\
&~\lesssim (\log t)^{-\epsilon}|\Theta_*(\tau(t))|R^{-a_1}(\tau(t))(\1_{r\leq \sqrt t}+tr^{-2}\1_{r\geq\sqrt t}).
\end{align*}
{\it Proof}: Indeed, we have
\begin{align*}
&\TT_6^{{\rm out}}\Big[\la^{-2}|\Theta_*|R^{-2-a}\1_{\{\rho\sim R\}}\Big]\lesssim \TT_6^{{\rm out}}\Big[\la^{-2}|\Theta_*|R^{-2-a}\1_{\{\frac{1}{2}\lambda R \leq r \leq 2\lambda R\}}\Big]\\
&\lesssim t^{\epsilon-3}e^{-\frac{r^2}{16t}} + |\Theta_*| R^{-a}\1_{\{r\leq 2\lambda R\}} + |\Theta_*| R^{-a}\lambda^4R^4|x|^{-4}e^{-\frac{r^2}{16t}}\1_{\{r\geq 2\lambda R\}} \\ 
& \lesssim (\log t)^{-\epsilon}|\Theta_*(\tau(t))|R^{-a_1}(\tau(t))(\1_{r\leq \sqrt t}+tr^{-2}\1_{r\geq\sqrt t}),
\end{align*}

\begin{align*}
&\TT_6^{{\rm out}}\Big[|\Theta_*|^2 R^{6-a}\langle \rho\rangle^{-6}\1_{\{\rho\lesssim R\}}\Big]\lesssim\TT_6^{{\rm out}}\Big[|\Theta_*|^2 R^{6-a}\1_{\{r\leq \la R\}}\Big]\\
&\lesssim t^{-3}e^{-\frac{r^2}{16t}} + |\Theta_*|^2 R^{6-a}\lambda^2R^2\1_{\{r\leq 2\lambda R\}} + |\Theta_*|^2 R^{6-a}\lambda^6R^6r^{-4}e^{-\frac{r^2}{16t}}\1_{\{r\geq 2\lambda R\}}\\ 
& \lesssim (\log t)^{-\epsilon}|\Theta_*(\tau(t))|R^{-a_1}(\tau(t))(\1_{r\leq \sqrt t}+tr^{-2}\1_{r\geq\sqrt t}).
\end{align*}

\noindent{\bf Claim 2}:
\begin{align*}
\TT_6^{{\rm out}}[g_2]\lesssim (\log t)^{-\epsilon}|\Theta_*(\tau(t))|R^{-a_1}(\tau(t))(\1_{r\leq \sqrt t}+tr^{-2}\1_{r\geq\sqrt t}).
\end{align*}
{\it Proof}: We have
\begin{align*}
&\TT_6^{{\rm out}}\Big[\la^{-2}|\Theta_*|\langle\rho\rangle^{-4}\1_{\{\rho\gtrsim R\}}\Big]\\ 
& \lesssim \TT_6^{{\rm out}}\Big[\la^{2}|\Theta_*|r^{-4}\1_{\{\lambda R \leq r \leq \sqrt{t}\}}\Big] + \TT_6^{{\rm out}}\Big[\la^{2}|\Theta_*|r^{-4}\1_{\{r\gtrsim \sqrt{t}\}}\Big]\\
& \lesssim \frac{t^{-2+\epsilon}}{(\log t)^a}e^{-\frac{|x|^2}{16t}} + \frac{|\Theta_*|}{R^2} \1_{r\leq \sqrt t} + \frac{\la^{2}|\Theta_*|}{t} tr^{-2}\1_{r\geq\sqrt t} + \frac{\la^{2}|\Theta_*|}{t}\1_{r\leq \sqrt t} + \frac{\la^{2}|\Theta_*|}{t} tr^{-2}\1_{r\geq\sqrt t}\\
& \lesssim (\log t)^{-\epsilon}|\Theta_*(\tau(t))|R^{-a_1}(\tau(t))(\1_{r\leq \sqrt t}+tr^{-2}\1_{r\geq\sqrt t}). 
\end{align*}

\noindent{\bf Claim 3}:
\begin{align*}
\TT_6^{{\rm out}}[g_3]&~ \lesssim \TT_6^{{\rm out}}[|\Theta_*|^2 R^{12-2a}\langle \rho\rangle^{-12}\|\phi\|_{\sharp}^2\1_{\{\rho\sim R\}}+|\Theta|^2\1_{\{\rho\gtrsim R\}}\\
&\quad +\Big[|\Theta_*|^2 R^{-2a_1}(\1_{r\leq \sqrt t}+tr^{-2}\1_{r\geq\sqrt t})^2\Big]\|\psi\|^2_\flat\1_{\{\rho\gtrsim R\}}\\
&\quad + \la^{2}|\Theta_*|^3 R^{18-3a}\langle \rho\rangle^{-16}\|\phi\|_{\sharp}^3\1_{\{\rho\sim R\}}+\la^{2}\rho^2|\Theta|^3\1_{\{\rho\gtrsim R\}}\\
&\quad +\la^{2}\rho^2\Big[|\Theta_*|^3 R^{-3a_1}(\1_{r\leq \sqrt t}+tr^{-2}\1_{r\geq\sqrt t})^3\Big]\|\psi\|^3_\flat\1_{\{\rho\gtrsim R\}}]\\ &~\lesssim (\|\phi\|^2_\sharp+\|\psi\|^2_\flat)\TT_6^{{\rm out}}\Big[\Big[|\Theta_*|^2 R^{-2a_1}(\1_{r\leq \sqrt t}+tr^{-2}\1_{r\geq\sqrt t})^2\Big]\1_{\{\rho\gtrsim R\}}\Big]\\
&~\quad+\TT_6^{{\rm out}}\Big[|\Theta|^2\1_{\{\rho\gtrsim R\}}\Big] + \TT_6^{{\rm out}}[\la^{2}|\Theta_*|^3 R^{18-3a}\langle \rho\rangle^{-16}\|\phi\|_{\sharp}^3\1_{\{\rho\sim R\}}+\la^{2}\rho^2|\Theta|^3\1_{\{\rho\gtrsim R\}}\\
&~\quad +\la^{2}\rho^2\Big[|\Theta_*|^3 R^{-3a_1}(\1_{r\leq \sqrt t}+tr^{-2}\1_{r\geq\sqrt t})^3\Big]\|\psi\|^3_\flat\1_{\{\rho\gtrsim R\}}]\\
&~\lesssim (\|\phi\|^2_\sharp+\|\psi\|^2_\flat+\|\phi\|^3_\sharp+\|\psi\|^3_\flat)(\log t)^{-\epsilon}|\Theta_*(\tau(t))|R^{-a_1}(\tau(t))(\1_{r\leq \sqrt t}+tr^{-2}\1_{r\geq\sqrt t}) \\
&~\quad+ (\log t)^{-\epsilon}|\Theta_*(\tau(t))|R^{-a_1}(\tau(t))(\1_{r\leq \sqrt t}+tr^{-2}\1_{r\geq\sqrt t}).
\end{align*}
{\it Proof}: We have
\begin{align*}
&\TT_6^{{\rm out}}\Big[|\Theta|^2\1_{\{\rho\gtrsim R\}}\Big]\\
& \lesssim \TT_6^{{\rm out}}\Big[t^{-2}(\log t)^{-2a}\1_{\{\lambda R\leq r\leq\sqrt t\}}+ r^{-4}(\log r)^{-2a}\1_{\{r\geq\sqrt t\}}\Big]\\
& \lesssim t^{-1}(\log t)^{-2a}e^{-\frac{|x|^2}{16t}} + t^{-1}(\log t)^{-2a}\1_{\{r\leq\sqrt t\}} + t^{-1}(\log t)^{-2a}tr^{-2}\1_{\{r\geq\sqrt t\}}\\
& \quad + t^{-1}(\log t)^{-2a}\1_{\{r\leq\sqrt t\}} + r^{-2}(\log r)^{-2a}tr^{-2}\1_{\{r\geq\sqrt t\}}\\
& \lesssim (\log t)^{-\epsilon}|\Theta_*(\tau(t))|R^{-a_1}(\tau(t))(\1_{r\leq \sqrt t}+tr^{-2}\1_{r\geq\sqrt t}),  
\end{align*}

\begin{align*}
&\TT_6^{{\rm out}}\Big[|\Theta_*|^2 R^{-2a_1}(\1_{r\leq \sqrt t}+tr^{-2}\1_{r\geq\sqrt t})^2\1_{\{\rho\gtrsim R\}}\Big]\\
&\lesssim \TT_6^{{\rm out}}\Big[|\Theta_*|^2 R^{-2a_1}(\1_{\lambda R \leq r\leq \sqrt t}+t^2r^{-4}\1_{r\geq\sqrt t})\Big]\\
&\lesssim t^{-1}(\log t)^{-2a}e^{-\frac{r^2}{16t}} + |\Theta_*|^2 R^{-2a_1}t\1_{\{r\leq\sqrt t\}} + |\Theta_*|^2 R^{-2a_1}t^3r^{-4}e^{-\frac{r^2}{16t}}\1_{\{r\geq\sqrt t\}}\\
&\quad + |\Theta_*|^2 R^{-2a_1}t\1_{\{r\leq\sqrt t\}} + |\Theta_*|^2 R^{-2a_1}t^3r^{-4}\1_{\{r\geq\sqrt t\}}\\
& \lesssim (\log t)^{-\epsilon}|\Theta_*(\tau(t))|R^{-a_1}(\tau(t))(\1_{r\leq \sqrt t}+tr^{-2}\1_{r\geq\sqrt t}), 
\end{align*}

\begin{align*}
&\TT_6^{{\rm out}}\Big[\la^{2}|\Theta_*|^3 R^{18-3a}\langle \rho\rangle^{-16}\1_{\{\rho\sim R\}}\Big]\\
&\lesssim \TT_6^{{\rm out}}\Big[\la^{2}|\Theta_*|^3R^{2-3a}\1_{\{\frac{1}{2}\lambda R \leq r \leq 2\lambda R\}}\Big]\\
&\lesssim t^{-3}e^{-\frac{r^2}{16t}} + \la^{2}|\Theta_*|^3R^{2-3a}(\lambda R)^2\1_{\{r\leq 2\lambda R\}} + \la^{2}|\Theta_*|^3R^{2-3a}\lambda^6R^6|x|^{-4}e^{-\frac{r^2}{16t}}\1_{\{r\geq 2\lambda R\}} \\ 
& \lesssim (\log t)^{-\epsilon}|\Theta_*(\tau(t))|R^{-a_1}(\tau(t))(\1_{r\leq \sqrt t}+tr^{-2}\1_{r\geq\sqrt t}), 
\end{align*}

\begin{align*}
&\TT_6^{{\rm out}}\Big[\la^{2}\rho^2|\Theta|^3\1_{\{\rho\gtrsim R\}}\Big]\\
& \lesssim \TT_6^{{\rm out}}\Big[t^{-3}(\log t)^{-3a}r^2\1_{\{\lambda R\leq r\leq\sqrt t\}}+ r^{-4}(\log r)^{-3a}\1_{\{r\geq\sqrt t\}}\Big]\\
& \lesssim t^{-3}e^{-\frac{r^2}{16t}} + t^{-1}(\log t)^{-3a}\1_{r\leq \sqrt t} + t^{-2}(\log t)^{-3a}r^{-4}t^3e^{-\frac{r^2}{16t}}\1_{r\geq \sqrt t} \\
& \quad + t^{-1}(\log t)^{-3a}\1_{\{r\leq\sqrt t\}} + r^{-2}(\log r)^{-3a}tr^{-2}\1_{\{r\geq\sqrt t\}}\\
& \lesssim (\log t)^{-\epsilon}|\Theta_*(\tau(t))|R^{-a_1}(\tau(t))(\1_{r\leq \sqrt t}+tr^{-2}\1_{r\geq\sqrt t}),  
\end{align*}

\begin{align*}
&\TT_6^{{\rm out}}\Big[\la^{2}\rho^2\Big[|\Theta_*|^3 R^{-3a_1}(\1_{r\leq \sqrt t}+tr^{-2}\1_{r\geq\sqrt t})^3\Big]\1_{\{\rho\gtrsim R\}}\Big]\\
&\lesssim \TT_6^{{\rm out}}\Big[r^2|\Theta_*|^3 R^{-3a_1}(\1_{\lambda R \leq r\leq \sqrt t}+t^3r^{-6}\1_{r\geq\sqrt t})\Big]\\
&\lesssim t^{-1}(\log t)^{-3a}e^{-\frac{r^2}{16t}} + |\Theta_*|^3 R^{-3a_1}t^2\1_{\{r\leq\sqrt t\}} + |\Theta_*|^3 R^{-3a_1}t^4r^{-4}e^{-\frac{r^2}{16t}}\1_{\{r\geq\sqrt t\}}\\
&\quad + |\Theta_*|^3 R^{-3a_1}t^2\1_{\{r\leq\sqrt t\}} + |\Theta_*|^3 R^{-3a_1}t^4r^{-4}\1_{\{r\geq\sqrt t\}}\\
& \lesssim (\log t)^{-\epsilon}|\Theta_*(\tau(t))|R^{-a_1}(\tau(t))(\1_{r\leq \sqrt t}+tr^{-2}\1_{r\geq\sqrt t}).
\end{align*}

\noindent{\bf Claim 4}:
\begin{align*}
\TT_6^{{\rm out}}[g_4]\lesssim&~\|\psi\|_\flat\TT_6^{{\rm out}}\Big[\la^{-2}|\Theta_*|\langle\rho\rangle^{-4}R^{-a_1}(\1_{r\leq \sqrt t}+tr^{-2}\1_{r\geq\sqrt t})\1_{\{\rho\gtrsim R\}}\Big]+\TT_6^{{\rm out}}[\la^{-2}|\Theta|\langle\rho\rangle^{-4}\1_{\{\rho\gtrsim R\}}]\\
& \lesssim \left(\|\psi\|_\flat + 1\right)(\log t)^{-\epsilon}|\Theta_*(\tau(t))|R^{-a_1}(\tau(t))(\1_{r\leq \sqrt t}+tr^{-2}\1_{r\geq\sqrt t})\\
& \lesssim (\log t)^{-\epsilon/2}|\Theta_*(\tau(t))|R^{-a_1}(\tau(t))(\1_{r\leq \sqrt t}+tr^{-2}\1_{r\geq\sqrt t}).
\end{align*}
{\it Proof}: We have 
\begin{align*}
&\TT_6^{{\rm out}}[\la^{-2}|\Theta|\langle\rho\rangle^{-4}\1_{\{\rho\gtrsim R\}}]\\ 
& \lesssim \TT_6^{{\rm out}}\Big[t^{-1}(\log t)^{-a}\lambda^{2}r^{-4}\1_{\{\lambda R\leq r\leq\sqrt t\}}+ \lambda^{2}r^{-6}(\log r)^{-a}\1_{\{r\geq\sqrt t\}}\Big]\\
&\lesssim t^{\epsilon-2}(\log t)^{-a}e^{-\frac{|x|^2}{16t}} + t^{-1}(\log t)^{-a}R^{-2}\1_{r\leq \lambda R}+ t^{-1}(\log t)^{-a}\lambda^{2}r^{-2}\1_{\la R \leq r\leq\sqrt t}\\ &\quad +t^{-1}(\log t)^{-a}\lambda^{2}r^{-4}e^{-\frac{r^2}{16t}}t\1_{r\geq\sqrt t} + t^{\epsilon-2}\1_{r\leq\sqrt t} + \left(r^{-6}t^{1+\epsilon} + t^{\epsilon-2}e^{-\frac{|x|^2}{16t}}\right)\1_{r\geq\sqrt t}\\
& \lesssim (\log t)^{-\epsilon}|\Theta_*(\tau(t))|R^{-a_1}(\tau(t))(\1_{r\leq \sqrt t}+tr^{-2}\1_{r\geq\sqrt t}),
\end{align*}

\begin{align*}
&\TT_6^{{\rm out}}\Big[\la^{-2}|\Theta_*|\langle\rho\rangle^{-4}R^{-a_1}(\1_{r\leq \sqrt t}+tr^{-2}\1_{r\geq\sqrt t})\1_{\{\rho\gtrsim R\}}\Big]\\ 
& \lesssim \TT_6^{{\rm out}}\Big[t^{-1}(\log t)^{-a}\lambda^{2}R^{-a_1}r^{-4}(\1_{\{\lambda R\leq r\leq\sqrt t\}}+tr^{-2}\1_{r\geq\sqrt t})\Big]\\
&\lesssim t^{\epsilon-2}(\log t)^{-a}e^{-\frac{|x|^2}{16t}} + t^{-1}(\log t)^{-a}R^{-2-a_1}\1_{r\leq \lambda R}+ t^{-1}(\log t)^{-a}\lambda^{2}r^{-2}R^{-a_1}\1_{\la R \leq r\leq\sqrt t}\\ &\quad +t^{-1}(\log t)^{-a}\lambda^{2}r^{-4}R^{-a_1}e^{-\frac{r^2}{16t}}t\1_{r\geq\sqrt t} + t^{\epsilon-2}\1_{r\leq\sqrt t} + \left(r^{-6}t^{1+\epsilon} + t^{\epsilon-2}e^{-\frac{|x|^2}{16t}}\right)\1_{r\geq\sqrt t}\\
& \lesssim (\log t)^{-\epsilon}|\Theta_*(\tau(t))|R^{-a_1}(\tau(t))(\1_{r\leq \sqrt t}+tr^{-2}\1_{r\geq\sqrt t}).
\end{align*}

Now we solve the outer problem (\ref{eqn-outer1}) in the set 
$$
B_{out}: = \{\psi\ |\ \|\psi\|_{\flat} \leq C_0\}.
$$
The above estimates give us that $\TT_6^{{\rm out}}[\mathcal G[\psi, \phi, \lambda_1]]\in B_{out}$ for $\psi\in B_{out}$. It is a contraction mapping in $B_{out}$ by similar arguments, thus there exists a unique solution $\psi\in B_{out}$ and (\ref{psi-exists}) holds. The gradient estimate follows from the scaling arguments and standard parabolic regularity. 
\end{proof}

\medskip

\section{Solving the scaling parameter}\label{oscillating-scale}
To ensure the solvable of the inner problem (\ref{eqn-inner}), we need the following orthogonality condition 
\begin{equation}\label{orthog-full}
    \int_0^{4R} \left[(12U-6\rho^2 U^2)(\psi+\Theta)+\frac{4\la^{-1}\dot\la}{(1+\rho^2)^2} + \mathcal N[\la,\phi,\psi]\right]Z(\rho)\rho^5 d\rho =  0.
\end{equation}
\begin{proposition}
For $t_0$ sufficiently large, there exists a solution $\lambda _1 = \lambda_1[\phi]$ satisfying the following estimate:
\begin{equation*}
|\dot\lambda_1|\lesssim (\log t)^{-\epsilon}\lambda_0\Theta_*(\tau).
\end{equation*}
\end{proposition}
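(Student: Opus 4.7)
The plan is to treat (\ref{orthog-full}) as an ODE for $\lambda$, subtract off the principal part $\lambda_0$ already identified in Section \ref{GNT-formula}, and then set up a Banach fixed point for the correction $\lambda_1$ in a weighted norm matching the target estimate.

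First I would isolate the $\dot\lambda$ contribution by computing the constant
\[
c_0 := \int_0^{\infty}\frac{4}{(1+\rho^2)^2}\,Z(\rho)\,\rho^5\,d\rho \;>\; 0,
\]
which converges because the integrand decays like $\rho^{-3}$ and whose truncation to $[0,4R]$ differs from $c_0$ by $O(R^{-2})$. Using the identity $12U-6\rho^2U^2 = 24\,Z(\rho)$, (\ref{orthog-full}) can be rewritten as the ODE
\[
\dot\lambda \;=\; -\frac{\lambda}{c_0}\!\int_0^{4R}\!\Bigl[24Z(\psi+\Theta)+\mathcal N[\lambda,\phi,\psi]\Bigr] Z(\rho)\,\rho^5\,d\rho \;+\; O(R^{-2})\,\dot\lambda .
\]
The $\Theta$ piece here is exactly the quantity analyzed in Section \ref{GNT-formula}: when $\lambda$ is replaced by $\lambda_0$, it reproduces the global scaling law (\ref{GNK-law}), while the subleading remainders (the $I_3,I_4,I_5$ contributions of that section) are bounded by $(\log t)^{-\epsilon}\lambda_0|\Theta_*|$.

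The next step is to bound the remaining contributions to the right-hand side. From (\ref{psi-exists}) we have $|\psi|\lesssim|\Theta_*|R^{-a_1}$ throughout the integration domain $\{r\le 4\lambda R\}\subset\{r\le\sqrt t\}$; since $\int_0^{\infty}Z^2\rho^5\,d\rho$ is a finite constant, the $\psi$ projection contributes at most $|\Theta_*|R^{-a_1}\sim|\Theta_*|(\log t)^{-a_1/100}$. For the nonlinear remainder $\mathcal N$ in (\ref{def-N1}), each surviving term carries at least two factors among $|\Theta|$, $\lambda^{-2}|\eta_R\phi|$ and $|\psi|$; using $|\Theta|\lesssim|\Theta_*|$ on the support of $Z\rho^5$, the inner bound $\lambda^{-2}|\eta_R\phi|\lesssim|\Theta_*|R^{6-a}\langle\rho\rangle^{-6}$ built into the norm (\ref{inner-norm}), and the outer bound for $\psi$, each integrand picks up an extra factor of at least $|\Theta_*|R^{-c}$ for some $c>0$ and thus contributes at the same $(\log t)^{-\epsilon}|\Theta_*|$ order as the other error terms.

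Collecting terms, the equation for $\lambda_1 = \lambda-\lambda_0$ takes the schematic form
\[
\dot\lambda_1 \;=\; \lambda_0\,F\!\left[\lambda_1,\phi,\psi[\phi,\lambda_1]\right],\qquad |F|\lesssim (\log t)^{-\epsilon}|\Theta_*|,
\]
with $F$ Lipschitz in $\lambda_1$ thanks to the quadratic/cubic structure of $\mathcal N$ and the Lipschitz dependence $\psi=\psi[\phi,\lambda_1]$ established in Section \ref{outer-problem}. I would then run a Banach fixed point in the norm
\[
\|\lambda_1\|_\star := \sup_{t\ge t_0}(\log t)^{\epsilon}\,\lambda_0^{-1}(t)\,|\Theta_*(\tau(t))|^{-1}\,|\dot\lambda_1(t)|,
\]
with initial condition $\lambda_1(t_0)=0$, integrating the ODE in $t$; contraction holds once $t_0$ is large enough, and the resulting fixed point satisfies the claimed estimate. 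The main obstacle is verifying that the $\psi$ projection really gains the advertised factor $R^{-a_1}$: one has to exploit both the explicit weight in (\ref{psi-exists}) and the spatial decay of $Z^2\rho^5$ to prevent a logarithmic loss coming from the factor $\mathbf{1}_{r\le\sqrt t}$ present in the outer bound.
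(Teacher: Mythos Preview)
Your approach is essentially the same as the paper's: isolate the $\dot\lambda$ coefficient, use Section \ref{GNT-formula} to handle the $\Theta$-projection, estimate the $\psi$-projection and the $\mathcal N$-projection separately, and close by a fixed point. Two points deserve comment.

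First, your claim that in the $\mathcal N$-projection ``each integrand picks up an extra factor of at least $|\Theta_*|R^{-c}$ for some $c>0$'' is not quite right: the inner bound $\lambda^{-2}|\eta_R\phi|\lesssim |\Theta_*|R^{6-a}\langle\rho\rangle^{-6}$ carries a \emph{positive} power of $R$, and after integrating against $Z\rho^5$ the quadratic and cubic pieces produce factors like $|\Theta_*|^2R^{12-2a}$ or $\lambda^2|\Theta_*|^3R^{18-3a}$ (cf.\ the paper's estimate of $I_7$). The conclusion survives only because $R=(\log t)^{1/100}$ while $|\Theta_*|=t^{-1}(\log t)^{-a}$, so the polynomial-in-$t$ decay of the extra $|\Theta_*|$ crushes any power of $R$; you should state this explicitly rather than claim a negative $R$-exponent.

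Second, the paper closes with the \emph{Schauder} fixed point theorem in the ball $\{f:\sup_{t\ge t_0}(\log t)^\epsilon|\Theta_*|^{-1}|f|\le 1\}$, using only continuity and compactness. Your Banach/contraction route is viable but requires the Lipschitz dependence of $\psi[\phi,\lambda_1]$ on $\lambda_1$, which Section \ref{outer-problem} does not establish (it only gives existence/uniqueness for fixed $(\phi,\lambda_1)$); you would need to supply that estimate. The ``main obstacle'' you flag at the end---a possible logarithmic loss in the $\psi$-projection---is not a real issue, since $\int_0^\infty Z^2\rho^5\,d\rho$ converges absolutely.
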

\begin{proof}
From the computations in Section \ref{GNT-formula}, we have 
\begin{equation}\label{theta-orthog}
\int_0^{4R} \left[(12U-6\rho^2 U^2)\Theta(\la\rho,t(\tau))+\frac{4\la^{-1}\dot\la}{(1+\rho^2)^2}\right]Z(\rho)\rho^5 d\rho = \sum_{i=1}^5I_i + 4\lambda^{-1}\dot\lambda\int_{0}^{4R}\frac{\rho^5}{(1+\rho^2)^4}d\rho.
\end{equation}
For the term $I_6: = \int_0^{4R}(12U-6\rho^2 U^2)\psi(\lambda\rho)Z(\rho)\rho^5 d\rho$, we have 
\begin{align*}
|I_6| & = \left|\int_0^{4R}(12U-6\rho^2 U^2)\psi(\lambda\rho)Z(\rho)\rho^5 d\rho\right| \leq 24\int_0^{4R}|\psi(\lambda\rho)|\frac{\rho^5}{(1+\rho^2)^4}d\rho\\
& \lesssim |\Theta_*(\tau(t))|R^{-a_1}(\tau(t))(\1_{r\leq \sqrt t}+tr^{-2}\1_{r\geq\sqrt t})\|\psi\|_\flat\int_0^{4R}\frac{\rho^5}{(1+\rho^2)^4}d\rho\lesssim \frac{1}{(\log t)^\epsilon}\Theta_*(\tau(t)),
\end{align*}
while for the term $I_7: = \int_0^{4R}\lambda^2\mathcal N[\la,\phi,\psi]Z(\rho)\rho^5 d\rho$, we have 
\begin{align*}
|I_7| & = \left|\int_0^{4R}\lambda^2\mathcal N[\la,\phi,\psi]Z(\rho)\rho^5 d\rho\right| \\
& \lesssim \left|\int_0^{4R}\left(|\Theta_*|^2 + |\Theta_*|^2R^{12-2a}\langle \rho\rangle^{-12} + |\Theta_*|^2R^{-2a_1}(\1_{r\leq \sqrt t}+tr^{-2}\1_{r\geq\sqrt t})^2\|\psi\|_\flat^2\right)Z(\rho)\rho^5 d\rho\right|\\
&\quad + \left|\int_0^{4R}\left(\lambda^2\rho^2|\Theta_*|^3 + \lambda^2\rho^2 |\Theta_*|^3R^{18-3a}\langle \rho\rangle^{-18} + \lambda^2\rho^2|\Theta_*|^3R^{-3a_1}(\1_{r\leq \sqrt t}+tr^{-2}\1_{r\geq\sqrt t})^3\|\psi\|_\flat^3\right)Z(\rho)\rho^5 d\rho\right|\\
& \lesssim |\Theta_*|^2\int_0^{4R}Z(\rho)\rho^5 d\rho + |\Theta_*|^2R^{12-2a}\int_0^{4R}Z(\rho)\langle \rho\rangle^{-7} d\rho + |\Theta_*|^2R^{-2a_1}\|\psi\|_\flat^2\int_0^{4R}Z(\rho)\rho^5 d\rho\\
&\quad+\lambda^2|\Theta_*|^3\int_0^{4R}Z(\rho)\rho^7 d\rho + \lambda^2 |\Theta_*|^3R^{18-3a}\int_0^{4R}Z(\rho)\langle \rho\rangle^{-13} d\rho + \lambda^2|\Theta_*|^3R^{-3a_1}\|\psi\|_\flat^3\int_0^{4R}Z(\rho)\rho^7 d\rho\\
& \lesssim |\Theta_*|^2R^2 + |\Theta_*|^2R^{12-2a} + |\Theta_*|^2R^{2-2a_1}\|\psi\|_\flat^2+\lambda^2|\Theta_*|^3R^4 + \lambda^2 |\Theta_*|^3R^{18-3a} + \lambda^2|\Theta_*|^3R^{4-3a_1}\|\psi\|_\flat^3\\
&\lesssim \frac{1}{(\log t)^\epsilon}\Theta_*(\tau(t)).
\end{align*}
Let us recall from Section \ref{GNT-formula} that in (\ref{theta-orthog}), 
\begin{align*}
    I_1=\left[C_1+O(R^{-2})+O(\la^2R^2/t)\right]\int_0^{M\sqrt t}\frac{s^5}{64t^3} e^{-\frac{s^2}{4t}}\Theta_0(s)ds,
\end{align*}
where $C_1=\int_0^\infty V(\rho) Z(\rho)\rho^5 d\rho=4$, $Z(\rho)=\frac1{(1+\rho^2)^2}$, $V(\rho)=\frac{24}{(1+\rho^2)^2}$. Also we have
\begin{align*}
    |I_3|& \lesssim t^{-1}(\log t)^{-\alpha}e^{-\frac{t}{\la^2 R^2}}(\frac{t}{\la^2 R^2}+1)+\la^{\epsilon}t^{-1-\epsilon/2}(\log t)^{-\alpha},\\
    |I_4|& \lesssim t^{-1-\epsilon/2}(\log t)^{-\alpha} \la^{\epsilon}R^{5/2+\epsilon},\\
    |I_5|& \lesssim C\lambda^2R^2\frac{1}{t^2}
\end{align*}
for $\epsilon>0$ small. Let us denote $\lambda(t) = \lambda_0(t) + \lambda_1(t)$. The main term $\lambda_0(t)$ of $\lambda(t)$ is 
$$
\int_0^{4R} \left((12U-6\rho^2 U^2)\Theta(\lambda\rho, t)+\frac{4\la^{-1}\dot\la}{(1+\rho^2)^2}\right)Z(\rho)\rho^5 d\rho  = 0.
$$
From the computations in Section \ref{GNT-formula}, the above identity reduces to 
$$
\frac{\dot\lambda_0}{\lambda_0} + C_1\int_{0}^{\frac{t}{2\la R}}\frac{s^5}{64t^3} e^{-\frac{s^2}{4t}}\Theta_0(s)ds = 0.
$$
Thus from Lemma \ref{choice-of-Theta_0}, $\lambda_0(t)$ takes the following form
\begin{align*}
&\la_0(t)= C_{t_0}\exp\left(-\frac{3}{4}\left(\log(2+t)\right)^{1-a}\cos(\log\log(2+t))\right),
\end{align*}
with $C_{t_0} = \exp\left(\frac{3}{4}\left(\log(2+t_0)\right)^{1-a}\cos(\log\log(2+t_0))\right)$.
Now we derive the equation for $\lambda_1(t)$ as follows,  
\begin{align}\label{equation-for-lambda_1}
\dot\lambda_1 + \lambda_0\sum_{i=2}^7 I_i + \lambda_0[O(R^{-2})+O(\la^2/t)]\int_0^{M\sqrt t}\frac{s^5}{8t^3} e^{-\frac{s^2}{4t}}\Theta_0(s)ds -\frac{\dot\la_0}{\la_0}\lambda -\frac{\dot\la_1}{\la_0}\lambda_1 = 0.
\end{align}
Let us work in the space 
$$
B_{\dot\lambda_1} : = \{f\in C[t_0, +\infty)\ |\ \|f\|_{\dot\lambda_1}\leq 1\}
$$
with the norm $\|f\|_{\dot\lambda_1}: = \sup_{t\geq t_0}(\log t)^\epsilon \left(\Theta_*(\tau)\right)^{-1}|f(t)|$. Then the Schauder fixed point theorem gives us a solution $\lambda_1\in B_{\dot\lambda_1}$ of (\ref{equation-for-lambda_1}),  and it holds that $\dot\lambda_1 \sim \frac{1}{(\log t)^\epsilon}\lambda_0\Theta_*(\tau)$.
\end{proof}

\section{Solving the inner problem: Proof of Theorem \ref{thm}}\label{inner-problem}
For the inner problem
\begin{equation*}
    \la^2 \phi_t=\phi_{\rho\rho}+\frac5\rho\phi_\rho+(12U-6\rho^2 U^2)\phi+\lambda^2(12U-6\rho^2 U^2)(\psi+\Theta)+\frac{4\la\dot\la}{(1+\rho^2)^2}+\lambda^2\mathcal N[\la,\phi,\psi]
\end{equation*}
in $B_{2R}\times (t_0,\infty)$ with
\begin{align*}
H[\phi](\rho, t) = \lambda^2(12U-6\rho^2 U^2)(\psi+\Theta)+\frac{4\la\dot\la}{(1+\rho^2)^2}+\lambda^2\mathcal N[\la,\phi,\psi].
\end{align*}
Let us consider the transformation of time variable  (\ref{time-variable-transformation}), then the inner problem becomes 
\begin{align}\label{inner-problem-time-new-variable}
    \phi_\tau & =\phi_{\rho\rho}+\frac5\rho\phi_\rho+(12U-6\rho^2 U^2)\phi\\
    &\qquad\qquad\qquad+\lambda^2(12U-6\rho^2 U^2)(\psi+\Theta)(\lambda \rho, t(\tau))+\frac{4\la\dot\la}{(1+\rho^2)^2}+\lambda^2\mathcal N[\la,\phi,\psi](\lambda \rho, t(\tau)).\notag
\end{align}
The right hand side of (\ref{inner-problem-time-new-variable}) can be estimated as follows,
\begin{align*}
&|\lambda^2(12U-6\rho^2 U^2)(\psi+\Theta)|\\
&\lesssim \lambda^2|\Theta_*|\langle\rho\rangle^{-4}R^{-a_1}(\1_{r\leq \sqrt t}+tr^{-2}\1_{r\geq\sqrt t})\|\psi\|_\flat + \lambda^2|\Theta_*|\langle\rho\rangle^{-4}\\
&\lesssim \lambda^2|\Theta_*|\langle \rho\rangle^{-2-a},
\end{align*}
\begin{align*}
&\left|\frac{4\la\dot\la}{(1+\rho^2)^2}\right| \lesssim \lambda^2|\Theta_*|\langle\rho\rangle^{-4}\lesssim \lambda^2|\Theta_*|\langle \rho\rangle^{-2-a},
\end{align*}
\begin{align*}
&|\lambda^2\mathcal N[\la,\phi,\psi]| \\
&\lesssim \lambda^2\left|6(U_\la+\Theta+\eta_R\la^{-2}\phi+\psi)^2-2r^2(U_\la+\Theta+\eta_R\la^{-2}\phi+\psi)^3-6U_\la^2+2r^2 U_\la^3\right. \\ &\qquad\qquad\qquad\qquad\qquad\qquad\qquad\qquad\qquad\qquad\qquad\qquad\qquad\qquad\qquad\left.- (12U_\la-6r^2 U_\la^2)(\Theta+\eta_R\la^{-2}\phi+\psi)\right|\\
&\lesssim \lambda^2(\Theta+\eta_R\la^{-2}\phi+\psi)^2 + \lambda^2r^2U_{\lambda}(\Theta+\eta_R\la^{-2}\phi+\psi)^2  + \lambda^2r^2(\Theta+\eta_R\la^{-2}\phi+\psi)^3\\
&\lesssim \lambda^2|\Theta_*|^2 + \lambda^2 |\Theta_*|^2R^{12-2a}\langle \rho\rangle^{-12}\|\phi\|_{\sharp}^2 + \lambda^2|\Theta_*|^2R^{-2a_1}(\1_{r\leq \sqrt t}+tr^{-2}\1_{r\geq\sqrt t})^2\|\psi\|_\flat^2\\
&\quad + \lambda^4\rho^2|\Theta_*|^3 + \lambda^4\rho^2 |\Theta_*|^3R^{18-3a}\langle \rho\rangle^{-18}\|\phi\|_{\sharp}^3 + \lambda^4\rho^2|\Theta_*|^3R^{-3a_1}(\1_{r\leq \sqrt t}+tr^{-2}\1_{r\geq\sqrt t})^3\|\psi\|_\flat^3\\
&\lesssim \left(|\Theta_*|R^{2+a} + |\Theta_*|R^{12-2a}\langle \rho\rangle^{-10+a}\|\phi\|_{\sharp}^2 + |\Theta_*|R^{2+a-2a_1}(\1_{r\leq \sqrt t}+tr^{-2}\1_{r\geq\sqrt t})^2\|\psi\|_\flat^2\right)\lambda^2|\Theta_*|\langle \rho\rangle^{-2-a}\\
&\quad + \left(\rho^{4+a}|\Theta_*|^2+\rho^2 |\Theta_*|^2R^{18-3a}\langle \rho\rangle^{-16+a}\|\phi\|_{\sharp}^3+\rho^{4+a}|\Theta_*|^2R^{-3a_1}(\1_{r\leq \sqrt t}+tr^{-2}\1_{r\geq\sqrt t})^3\|\psi\|_\flat^3\right)\lambda^4|\Theta_*|\langle \rho\rangle^{-2-a}\\
&\lesssim \left(|\Theta_*|R^{2+a} + |\Theta_*|R^{12-2a}\|\phi\|_{\sharp}^2 + |\Theta_*|R^{2+a-2a_1}(\1_{r\leq \sqrt t}+tr^{-2}\1_{r\geq\sqrt t})^2\|\psi\|_\flat^2\right)\lambda^2|\Theta_*|\langle \rho\rangle^{-2-a}\\
&\quad + \left(R^{4+a}|\Theta_*|^2+|\Theta_*|^2R^{20-3a}\|\phi\|_{\sharp}^3+|\Theta_*|^2R^{4+a-3a_1}(\1_{r\leq \sqrt t}+tr^{-2}\1_{r\geq\sqrt t})^3\|\psi\|_\flat^3\right)\lambda^4|\Theta_*|\langle \rho\rangle^{-2-a}.
\end{align*}
Now let us recall Lemma \ref{Prop-inner}, and by the choice of $\lambda$ in Section \ref{oscillating-scale}, the orthogonality condition (\ref{inner-orthogonal-cond}) holds, hence (\ref{inner-problem-time-new-variable}) is equivalent to the following fixed point problem
\begin{equation}\label{fixed-point-inner-problem}
\phi = \mathcal T_{in}[H[\phi]].
\end{equation}
From the above estimates, $\mathcal T_{in}[B_{in}]\subset B_{in}$ with $B_{in} = \{\phi: \|\phi\|_{\sharp}\leq C\}$. Furthermore, by similar arguments as in \cite{LiWeiZhangZhou}, the operator $\mathcal T_{in}[\phi]$ is compact, then the Schauder fixed point theorem ensures the existence of a fixed point for (\ref{fixed-point-inner-problem}), which is the solution of the inner problem (\ref{inner-problem-time-new-variable}). This concludes the proof of Theorem \ref{thm}.

\appendix

\section{Convolution estimates in the outer problem}\label{Appendix-convo}

\begin{proof}[Proof of Lemma \ref{linear-outer}]
{\bf (1)} We consider (\ref{RHS-with-upper-bound}). For $n\geq 3$, we have 
\begin{equation*}
\begin{aligned}
			&
			\TT_n^{out}\left[ v(t) |x|^{-b} (\log |x|)^{-a}\1_{\{ l_1(t) \le |x| \le l_2(t) \}}\right]\\
			&\lesssim
			t^{-\frac n2}
			\int_{\frac{t_{0}}{2} }^{\frac t2}
			\int_{\RR^n}
			e^{-\frac{|x- y|^2}{4 t} }
			v(s) |y|^{-b}(\log |y|)^{-a} \1_{\{ l_1(s) \le |y| \le l_2(s) \}} d y d s
			\\
			&\quad  +
			\sup\limits_{ t_1 \in [t/2,t] } v(t_1)
			\int_{\frac t2}^{t}
			\int_{\RR^n}
			(t-s)^{-\frac n2}
			e^{-\frac{|x- y|^2}{4(t-s )} }
			|y|^{-b} (\log |y|)^{-a} \1_{\{ C_l^{-1} l_1(t) \le |y| \le C_l l_2(t) \}} d y d s\\
			&:=
			u_{1} + \sup\limits_{ t_1 \in [t/2,t] } v(t_1) u_{2} .
\end{aligned}
\end{equation*}
For $|x|\le 2C_* t^{\frac 12}$, we have
\begin{equation*}
\begin{aligned}
		u_{1}&\lesssim
		t^{-\frac n2}
		\int_{\frac{t_{0}}{2} }^{\frac t2}
		\int_{\RR^n}
		v(s) |y|^{-b}(\log |y|)^{-a} \1_{\{ l_1(s)\le |y| \le l_2(s) \}} d y d s\\
		&\lesssim
		t^{-\frac n2}
		\int_{\frac {t_{0}}2} ^{\frac t2}
		v(s)
		\begin{cases}
			l_2^{n-b}(s)(\log(l_2(s)))^{-a}
			&
			\mbox{ \ if \ }
			b<n
			\\
			(\log(l_2(s)))^{1-a}
			&
			\mbox{ \ if \ }
			b=n
		\end{cases}
		d s.
\end{aligned}
\end{equation*}
For $|x| > 2C_* t^{\frac 12}$, one has  $|x-y|\ge \frac{|x|}{2}$. Then
	\begin{equation*}
		u_{1}
		\lesssim
		t^{-\frac n2} e^{-\frac{|x|^2}{16 t} }
		\int_{\frac {t_{0}}2} ^{\frac t2}
		v(s)
		\begin{cases}
			l_2^{n-b}(s)(\log(l_2(s)))^{-a}
			&
			\mbox{ \ if \ }
			b<n
			\\
			(\log(l_2(s)))^{1-a}
			&
			\mbox{ \ if \ }
			b=n
		\end{cases}
		d s
		.
	\end{equation*}
For $|x| \le (2 C_l)^{-1} l_1(t)$,
since $\frac{|y|}{2} \le |x-y|\le 2|y|$, then for a sufficiently small $
\epsilon > 0$, we have 
	\begin{equation*}
		\begin{aligned}
			u_{2}
			&\le
			\int_{\frac t2}^{t}
			\int_{\RR^n}
			(t-s)^{-\frac n2}
			e^{-\frac{|y|^2}{16 (t-s )} }
			|y|^{-b} (\log |y|)^{-a} \1_{\{ C_l^{-1} l_1(t) \le |y| \le C_l l_2(t) \}} d y d s\\
			&\lesssim
            \begin{cases}
			l_2(t)^{\epsilon} (\log l_2(t))^{-a}\int_{\frac t2}^{t}
			(t-s)^{-\frac{b+\epsilon}{2}}
			\int_{\frac{l_1^2(t)}{16 C_l^2 (t-s)} }^{
				\frac{C_l^2 l_2^2(t)}{16(t-s)}}
			e^{-z} z^{\frac{n-b-\epsilon}{2} -1}  d z d s& \mbox{ \ if \ }
				b<2
				\\
                (\log l_1(t))^{-a}\int_{\frac t2}^{t}
			(t-s)^{-\frac b2}
			\int_{\frac{l_1^2(t)}{16 C_l^2 (t-s)} }^{
				\frac{C_l^2 l_2^2(t)}{16(t-s)}}
			e^{-z} z^{\frac{n-b}{2} -1}  d z d s& \mbox{ \ if \ }
				b=2\\
                (\log l_1(t))^{-a}\int_{\frac t2}^{t}
			(t-s)^{-\frac{b-\epsilon}{2}}
			\int_{\frac{l_1^2(t)}{16 C_l^2 (t-s)} }^{
				\frac{C_l^2 l_2^2(t)}{16(t-s)}}
			e^{-z} z^{\frac{n-b+\epsilon}{2} -1}  d z d s& \mbox{ \ if \ }
				b>2
			\end{cases}
            \\
            &\lesssim\begin{cases}
			  l_2(t)^{\epsilon} (\log l_2(t))^{-a}
			\left(
			\int_{\frac t2}^{t-l_2^2(t) }
			+
			\int_{t-l_2^2(t)}^{t-l_1^2(t)}
			+
			\int_{t- l_1^2(t) }^{t}
			\cdots
			\right)
			:=  l_2(t)^{\epsilon} (\log l_2(t))^{-a}
			\left(u_{21}^1 + u_{22}^1 + u_{23}^1\right) \mbox{ \ if \ }
				b<2\\
            (\log l_1(t))^{-a}\left(
			\int_{\frac t2}^{t-l_2^2(t) }
			+
			\int_{t-l_2^2(t)}^{t-l_1^2(t)}
			+
			\int_{t- l_1^2(t) }^{t}
			\cdots
			\right)
			:=  (\log l_1(t))^{-a}
			\left(u_{21}^2 + u_{22}^2 + u_{23}^2\right) \mbox{ \ if \ }
				b=2\\
            (\log l_1(t))^{-a}\left(
			\int_{\frac t2}^{t-l_2^2(t) }
			+
			\int_{t-l_2^2(t)}^{t-l_1^2(t)}
			+
			\int_{t- l_1^2(t) }^{t}
			\cdots
			\right)
			:=  (\log l_1(t))^{-a}
			\left(u_{21}^3 + u_{22}^3 + u_{23}^3\right) \mbox{ \ if \ }
				b>2\\
            \end{cases}\\
			& \lesssim
			\begin{cases}
				l_2^{2-b}(t)(\log l_2(t))^{-a}
				& \mbox{ \ if \ }
				b<2
				\\
			\langle	\ln (\frac{l_2(t)}{l_1(t)}) \rangle  (\log l_1(t))^{-a}
				& \mbox{ \ if \ }
				b=2
				\\
			l_1^{2-b}(t)(\log l_1(t))^{-a}
				& \mbox{ \ if \ }
				b>2 .
			\end{cases}
		\end{aligned}
	\end{equation*}
To obtain the above inequality, we have the following estimates (with $b$ in place of $b+\epsilon$ if $b<2$). Since we assume $n\geq 3$, it holds that 
	\begin{equation*}
			\int_{\frac t2}^{t- l_2^2(t) }
			(t-s)^{-\frac b2}
			\int_{\frac{l_1^2(t) }{ 16 C_l^2 (t-s)}}^{\frac{ C_l^2 l_2^2(t) }{16(t-s)}}
			z^{\frac{n-b}{2} - 1}
			d z d s
			\lesssim
			\begin{cases}
				l_2^{2-b}(t)
				&
				\mbox{ \ if \ } b<n
				\\
				\langle \log (\frac{l_2(t)}{l_1(t)}) \rangle
				l_2^{2-n}(t)
				&
				\mbox{ \ if \ } b=n.
			\end{cases}
	\end{equation*}
Since $l_1^2(t) \lesssim t-s \lesssim l_2^2(t)$, then
	\begin{equation*}
		\begin{aligned}
			\int_{t-l_2^2(t)}^{t-l_1^2(t)}
			(t-s)^{-\frac b2}
			\int_{\frac{l_1^2(t) }{ 16 C_l^2 (t-s)}}^{\frac{ C_l^2 l_2^2(t) }{16(t-s)}}
			e^{-z}z^{\frac{n-b}{2} - 1}
			d z d s & \lesssim\int_{t-l_2^2(t)}^{t-l_1^2(t)}
			(t-s)^{-\frac b2}
			\begin{cases}
				1
				& \mbox{ \ if \ }
				b<n
				\\
			\langle \log ( \frac{ t-s}{l_1^2(t)})  \rangle
				& \mbox{ \ if \ }
				b=n
			\end{cases}
			d s\\
			&\lesssim
			\begin{cases}
				l_2^{2-b}(t)
				& \mbox{ \ if \ }
				b<2
				\\
				\langle \log (\frac{l_2(t)}{l_1(t)})  \rangle
				& \mbox{ \ if \ }
				b=2
				\\
				l_1^{2-b}(t)
				& \mbox{ \ if \ }
				b>2 .
			\end{cases}
		\end{aligned}
	\end{equation*}	
Also we have
	\begin{equation*}
    \begin{aligned}
			\int_{t-l_1^2(t)}^{t}(t-s)^{-\frac b2}
			\int_{\frac{l_1^2(t) }{ 16 C_l^2 (t-s)}}^{\frac{ C_l^2 l_2^2(t) }{16(t-s)}}
			e^{-z}z^{\frac{n-b}{2} - 1}
			d z d s & \lesssim\int_{t-l_1^2(t)}^{t}
			(t-s)^{-\frac b2}
			\int_{\frac{l_1^2(t) }{ 16 C_l^2 (t-s)}}^{\frac{C_l^2 l_2^2(t) }{16(t-s) }}
			e^{-\frac z2}
			d z d s\\
			&\lesssim
			\int_{t-l_1^2(t)}^{t}
			(t-s)^{-\frac b2}
			e^{- \frac{l_1^2(t)   }{ 32C_l^2 (t-s)} } d s
			\sim
			l_1^{2-b}(t) .
    \end{aligned}
	\end{equation*}
For $ (2C_l)^{-1} l_1(t) \le |x| \le 2C_l l_2(t)$,  then
	\begin{equation*}
		\begin{aligned}
			u_{2} & \le\int_{\frac t2}^{t}
			\int_{\RR^n} 
			(t-s)^{-\frac n2}
			e^{-\frac{|x- y|^2}{4(t-s )} } 
			|y|^{-b}(\log |y|)^{-a}\\
			&\qquad\qquad\qquad\qquad\qquad\qquad\qquad\left(
			\1_{\{ (4C_l)^{-1} l_1(t) \le |y| \le \frac{|x|}{2} \}}
			+
			\1_{\{\frac{|x|}{2} \le |y| \le 2|x| \}}
			+
			\1_{\{ 2|x| \le |y| \le 4C_l l_2(t) \}}
			\right)
			d y d s
			\\
			& := 
			u_{21} + u_{22} + u_{23}
			\lesssim
			\begin{cases}
				l_2^{2-b}(t)\langle\log l_2(t)\rangle^{-a}
				&
				\mbox{ \ if \ } b<2
				\\
				\langle \log(\frac{l_2(t)}{|x|} ) \rangle \langle\log |x|\rangle^{-a}
				&
				\mbox{ \ if \ } b=2
				\\
				|x|^{2-b}\langle\log |x|\rangle^{-a}
				&
				\mbox{ \ if \ } 2<b<n
                \\
				|x|^{2-n} \langle	\log (\frac{|x|}{l_1(t)}) \rangle\langle\log l_1(t)\rangle^{-a}
				&
				\mbox{ \ if \ } b=n.
			\end{cases}
		\end{aligned}
	\end{equation*}
For $u_{21}$, since $n\geq 3$, one has
	\begin{equation*}
    \begin{aligned}
			u_{21}
			&\leq
			\int_{\frac t2}^{t}
			\int_{\RR^n}
			(t-s)^{-\frac n2}
			e^{-\frac{|x|^2}{16(t-s )} }
			|y|^{-b}(\log |y|)^{-a}
			\1_{ \{ (4C_l)^{-1} l_1(t) \le |y| \le \frac{|x|}{2} \}}
			d y d s\\
			& \lesssim
			\begin{cases}
				|x|^{2-b}\langle \log |x|\rangle ^{-a}
				&
				\mbox{ \ if \ } b<n
				\\
				|x|^{2-n} \langle	\log (\frac{|x|}{l_1(t)}) \rangle\langle\log l_1(t)\rangle^{-a}
				&
				\mbox{ \ if \ } b=n.
			\end{cases}
    \end{aligned}
	\end{equation*}
	For $u_{22}$, we have
	\begin{equation*}
    \begin{aligned}
			u_{22}
		    &\lesssim
			|x|^{-b}\langle\log (|x|)\rangle^{-a}
			\int_{\frac t2}^{t}
			\int_{\RR^n}
			(t-s)^{-\frac n2}
			e^{-\frac{|x- y|^2}{4(t-s )} }
			\1_{\{ |x-y| \le 3|x| \}}
			d y d s
			\\
            &\lesssim
			|x|^{-b}\langle\log (|x|)\rangle^{-a}
			\int_{\RR^n}\frac{1}{|x-y|^{n-2}} \1_{\{ |x-y| \le 3|x| \}}d y
			\sim
			|x|^{2-b}\langle\log (|x|)\rangle^{-a}.
    \end{aligned}
	\end{equation*}
	For $u_{23}$, we have
	\begin{equation*}
		\begin{aligned}
			u_{23}
			& \le
			\int_{\frac t2}^{t}
			\int_{\RR^n}
			(t-s)^{-\frac n2}
			e^{-\frac{|y|^2}{16 (t-s )} }
			|y|^{-b}(\log |y|)^{-a}
			\1_{\{ 2|x| \le |y| \le 4C_l l_2(t) \}}
			d y d s\\
			& = 
			\left(
			\int_{\frac t2}^{t-\frac{l_2^2(t)}{2C_*^2}}
			+
			\int_{ t-\frac{l_2^2(t)}{2C_*^2} }^{t- \frac{|x|^2}{ 8C_l^2 C_*^2 }}
			+
			\int_{t- \frac{|x|^2}{8C_l^2 C_*^2 }}^{t}
			\cdots
			\right)
			:=
			u_{231} + u_{232} + u_{233}\\
			&\lesssim
			\begin{cases}
				l_2^{2-b}(t)\langle\log l_2(t)\rangle^{-a}
				&
				\mbox{ \ if \ } b<2
				\\
				\langle \log(\frac{l_2(t)}{|x|} ) \rangle \langle\log |x|\rangle^{-a}
				&
				\mbox{ \ if \ } b=2
				\\
				|x|^{2-b} \langle\log |x|\rangle^{-a}
				&
				\mbox{ \ if \ } b>2 .
			\end{cases}
		\end{aligned}
	\end{equation*}
Indeed, for $u_{231}$, we have
	\begin{equation*}
			u_{231}
			\lesssim
			\begin{cases}
				l_2^{2-b}(t)\langle\log l_2(t)\rangle^{-a}
				&
				\mbox{ \ if \ } b<n
				\\
				l_2^{2-n}(t)
			\langle\log \frac{l_2(t)}{|x|}\rangle\langle\log |x| \rangle^{-a}
				&
				\mbox{ \ if \ } b=n.
			\end{cases}
	\end{equation*}
	For $u_{232}$, since $n\geq 3$, we estimate
	\begin{equation*}
    \begin{aligned}
			u_{232}
			&\lesssim
	\int_{ t-\frac{l_2^2(t)}{2C_*^2} }^{t- \frac{|x|^2}{ 8C_l^2 C_*^2 }}
			(t-s)^{-\frac b2}
			\begin{cases}
				(t-s)^{\frac{\epsilon}{2}}(l_2(t))^\epsilon\langle\log l_2(t) \rangle^{-a}
				&
				\mbox{ \ if \ } b<n \mbox{\ and } b\neq 2
				\\
                \langle\log |x| \rangle^{-a}
				&
				\mbox{ \ if \ } b = 2\\
			\langle \log(\frac{|x|^2}{t-s}) \rangle\langle \log |x|\rangle^{-a}
				&
				\mbox{ \ if \ } b=n
			\end{cases}
			d s\\
			&\lesssim
			\begin{cases}
				l_2^{2-b}(t)\langle\log l_2(t) \rangle^{-a}
				&
				\mbox{ \ if \ } b<2
				\\
				\langle	\log(\frac{l_2(t)}{|x|} ) \rangle\langle\log |x|\rangle^{-a}
				&
				\mbox{ \ if \ } b=2
				\\
				|x|^{2-b}\langle \log |x|\rangle^{-a}
				&
				\mbox{ \ if \ } b>2 .
			\end{cases}
    \end{aligned}
	\end{equation*}
	For $u_{233}$, one has
	\begin{equation*}
			u_{233}
			\lesssim
			\int_{t- \frac{|x|^2}{8C_l^2 C_*^2 }}^{t}
			(t-s)^{-\frac b2}
			e^{-\frac{|x|^2}{8(t-s)}} d s \langle \log |x|\rangle^{-a}
			\sim
			|x|^{2-b}\langle \log |x|\rangle^{-a}
			\int_{ C_l^2 C_{*}^2}^{\infty}
			e^{-z} z^{\frac b2 -2}
			d z
			\sim
			|x|^{2-b}\langle \log |x|\rangle^{-a}.
	\end{equation*}
For $|x|\ge 2C_l l_2(t)$, since $\frac{|x|}{2} \le |x-y| \le 2|x|$, then for $n\geq 3$, we have 
	\begin{align*}
			u_{2}
			&\lesssim
			\int_{\frac{t}{2} }^{t}  (t-s)^{-\frac n2}
			e^
			{-\frac{|x|^2}{16(t-s )} }
			d s
			\begin{cases}
				l_2^{n-b}(t)\langle	\log l_2(t)\rangle^{-a}
				&
				\mbox{ \ if \ } b<n
				\\
			\langle	\log l_2(t)\rangle^{1-a}
				&
				\mbox{ \ if \ } b=n
			\end{cases}\\
			&\lesssim
			|x|^{2-n}
			e^{-\frac{|x|^2}{16 t}}
			\begin{cases}
				l_2^{n-b}(t)\langle	\log l_2(t)\rangle^{-a}
				&
				\mbox{ \ if \ } b<n
				\\
				\langle	\log l_2(t)\rangle^{1-a}
				&
				\mbox{ \ if \ } b=n.
			\end{cases}
	\end{align*}
{\bf (2)} Now we consider the estimate (\ref{RHS-without-upper-bound}). Write
		\begin{align*}
			&
			\TT_{n}^{out}\left[v(t) |x|^{-b}(\log |x|)^{-a} \1_{\{ |x|\ge t^{\frac 12}\}}\right]\\
			&\lesssim
			t^{-\frac n2}
			\int_{t_{0}/2}^{t/2}
			\int_{\RR^n}
			e^{-\frac{|x- y|^2}{4 t} }
			v(s)|y|^{-b}(\log |y|)^{-a} \1_{\{ |y|\ge s^{\frac 12}\}} d y d s
			\\
			&\quad +
			\sup\limits_{ t_1 \in [t/2,t]} v(t_1)
			\int_{t/2}^t
			\int_{\RR^n}
			(t-s)^{-\frac n2}
			e^{-\frac{|x- y|^2}{4(t-s )} }
			|y|^{-b}(\log |y|)^{-a} \1_{\{ |y|\ge 2^{-\frac 12} t^{\frac 12}\}} d y d s\\
			& :=
		t^{-\frac n2} u_{1} + \sup\limits_{ t_1 \in [t/2,t] } v(t_1) u_{2}.
		\end{align*}
	For $u_{1}$, when $|x|\le 2t^{\frac 12}$, we have
	\begin{equation*}
		\begin{aligned}
		u_{1}
			& \lesssim
			\int_{t_{0}/2}^{t/2}
			\int_{\RR^n}
			v(s)|y|^{-b}(\log |y|)^{-a}
			\1_{\{ s^{\frac 12} \le |y|\le 4 t^{\frac 12}  \}}
			d y d s
			+
			\int_{t_{0}/2}^{t/2}
			\int_{\RR^n}
			e^{-\frac{|y|^2}{16 t} }
			v(s)|y|^{-b}(\log |y|)^{-a}
			\1_{\{ 4 t^{\frac 12} \le  |y| \}}
			d y d s
			\\
&\lesssim
\int_{t_{0}/2}^{t/2}
v(s)
\begin{cases}
	t^{\frac{n-b}{2}}(\log(t))^{-a}
	&
	\mbox{ \ if \ } b<n
	\\
	(\log(t))^{1-a}
	&
	\mbox{ \ if \ } b=n
\end{cases}
d s
+
t^{\frac {n-b}2}(\log(t))^{-a}
\int_{t_{0}/2}^{t/2} v(s) d s\\
&\sim
\int_{t_{0}/2}^{t/2}
v(s)
\begin{cases}
	t^{\frac{n-b}{2}}(\log(t))^{-a}
	&
	\mbox{ \ if \ } b<n
	\\
	(\log(t))^{1-a}
	&
	\mbox{ \ if \ } b=n
\end{cases}
d s  .
		\end{aligned}
	\end{equation*}
For $u_{1}$, when $|x| > 2 t^{\frac 12}$, we have
		\begin{align*}
			u_{1} = \ &
			\int_{t_{0}/2}^{t/2}
			\int_{\RR^n}
			e^{-\frac{|x- y|^2}{4 t} }
			v(s)|y|^{-b}(\log |y|)^{-a}
			\left(
			\1_{\{ s^{\frac 12} \le |y|\le \frac{|x|}{2} \}}
			+
			\1_{\{ \frac{|x|}{2} \le |y|\le 2|x|\}}
			+
			\1_{\{ 2|x| \le |y| \}}
			\right)
			d y d s
			\\
\lesssim \ &
	\int_{t_{0}/2}^{t/2}
	v(s)
\int_{\RR^n}
\left(
e^{-\frac{|x|^2}{16 t} }
|y|^{-b}(\log |y|)^{-a}
\1_{\{ s^{\frac 12} \le |y|\le \frac{|x|}{2} \}}
+
|x|^{-b}(\log |x|)^{-a}
e^{-\frac{|x- y|^2}{4 t} }
\1_{\{ |x-y|\le 3|x|\}}
\right)
d y d s
\\  
&\quad + \int_{t_{0}/2}^{t/2}
	v(s)
\int_{\RR^n}
\left(
e^{-\frac{ |y|^2}{16 t} }
|y|^{-b}(\log |y|)^{-a}
\1_{\{ 2|x| \le |y| \}}
\right)
d y d s
\\
\lesssim \ &
 e^{-\frac{|x|^2}{16 t} }
			\int_{t_{0}/2}^{t/2}
			v(s)
			\begin{cases}
				|x|^{n-b}(\log(|x|))^{-a}
				&
				\mbox{ \ if \ }
				b<n
				\\
			(\log(|x|))^{1-a}
				&
				\mbox{ \ if \ }
				b=n
			\end{cases}
			d s
			+ t^{\frac{n}{2}} |x|^{-b}(\log(|x|))^{-a}
			\int_{t_{0}/2}^{t/2}
			v(s) d s\\
            & +
			t^{\frac {n-b}2}(\log(|x|))^{-a} e^{-\frac{|x|^2}{8t}}
			\int_{t_{0}/2}^{t/2}
			v(s) d s
			\\
			\lesssim \ &
	 e^{-\frac{|x|^2}{16 t} }
			\int_{t_{0}/2}^{t/2}
			v(s)
			\begin{cases}
				0
				&
				\mbox{ \ if \ }
				b<n
				\\
			(\log(|x|))^{1-a}
				&
				\mbox{ \ if \ }
				b=n
			\end{cases}
			d s
			+ 	t^{ \frac n2} |x|^{-b}(\log(|x|))^{-a}
			\int_{t_{0}/2}^{t/2}
			v(s) d s .
		\end{align*}
For $u_{2}$, when $|x|\le 2^{-\frac 32} t^{\frac 12}$, we have $|y|\ge 2|x|$. Then
	\begin{equation*}
    \begin{aligned}
			u_{2} &= 
			\int_{t/2}^t
			\int_{\RR^n}
			(t-s)^{-\frac n2}
			e^{-\frac{|y|^2}{16(t-s )} }
			|y|^{-b}(\log |y|)^{-a} \1_{\{ |y|\ge 2^{-\frac{1}{2}} t^{\frac 12}\}} d y d s\\
			&\lesssim
			(\log t)^{-a}\int_{t/2}^t
			(t-s)^{-\frac b2}
			e^{- \frac{t}{64(t-s) }}
			d s
			\sim
			t^{1-\frac b2} (\log t)^{-a}.
    \end{aligned}
	\end{equation*}
For $u_{2}$, when $|x|\ge 2^{-\frac 32} t^{\frac 12}$, one has
		\begin{align*}
			&u_{2} \\
            & =
			\int_{t/2}^t
			\int_{\RR^n}
			(t-s)^{-\frac n2}
			e^{-\frac{|x- y|^2}{4(t-s )} }
			|y|^{-b} (\log |y|)^{-a}
			\left(
			\1_{\{ 9^{-1} t^{\frac 12} \le |y|\le  \frac{|x|}{2} \}}
			+
			\1_{\{ \frac{|x|}{2} \le |y|\le  4|x| \}}
			+
			\1_{\{ 4|x|\le |y| \}}
			\right) d y d s
			\\
&\lesssim
\int_{t/2}^t
\int_{\RR^n}
(t-s)^{-\frac n2}
\left(
e^{-\frac{|x|^2}{16(t-s )} }
|y|^{-b}(\log |y|)^{-a}
\1_{\{ 9^{-1} t^{\frac 12} \le |y|\le  \frac{|x|}{2} \}}
+
|x|^{-b}(\log |x|)^{-a} e^{-\frac{|x- y|^2}{4(t-s )} }
\1_{\{ |x-y|\le  5|x| \}}
\right) d y d s
\\
&\quad +\int_{t/2}^t
\int_{\RR^n}
(t-s)^{-\frac n2}
	e^{-\frac{|y|^2}{16(t-s )} }
|y|^{-b}(\log |y|)^{-a}
\1_{\{ 4|x|\le |y| \}} d y d s
\\
			& \lesssim
			|x|^{2-n}
			e^{-\frac{|x|^2}{16 t}}
			\begin{cases}
				|x|^{n-b}(\log |x|)^{-a}
				&
				\mbox{ \ if \ } b<n
				\\
		        (\log(\frac{|x|}{\sqrt{t}}))(\log t)^{-a}
				&
				\mbox{ \ if \ } b=n
			\end{cases}
			+
			t  |x|^{-b}(\log |x|)^{-a}
			+
			|x|^{2-b}(\log |x|)^{-a}
			e^{-\frac{|x|^2}{2t}}\\
			&\sim
			t |x|^{-b}(\log t)^{-a}.
		\end{align*}
This concludes the proof.
\end{proof}

\medskip

\begin{proof}[Proof of Lemma \ref{Lemma2.32.3}]
Set
\begin{align*}
|\Theta(r,t)| & \lesssim (4\pi t)^{-\frac n2} \int_{\RR^n}e^{-\frac{|x-y|^2}{4t} }\langle y \rangle^{-b}(\log \langle y \rangle)^{-a}d y\\
& \sim t^{-\frac n2}\left(\int_{|y|\le \frac{|x|}{2}}
+
\int_{\frac{|x|}{2} \le|y| \le 2|x|}
+
\int_{2|x| \le |y|}
\right)
e^{-\frac{|x-y|^2}{4t} }
\langle y \rangle^{-b}(\log \langle y \rangle)^{-a} d y.
\end{align*}
We estimate the above terms as follows,
\begin{align*}
\int_{|y|\le \frac{|x|}{2}}
		e^{-\frac{|x-y|^2}{4t} }
		\langle y\rangle^{-b}(\log \langle y \rangle)^{-a} d y
&\lesssim e^{-\frac{|x|^2}{16t} }
		\int_{|y|\le \frac{|x|}{2}}
		\langle y\rangle^{-b}(\log \langle y \rangle)^{-a}  d y\\
		&\lesssim
\begin{cases}
e^{-\frac{|x|^2}{16t} } |x|^{n}
\mbox{ \ if \ } |x| \le  1
\\
		e^{-\frac{|x|^2}{16t} } |x|^{n-b} (\ln (|x| +2 ))^{-a}
&
\mbox{ \ if \ } |x|> 1
\end{cases},
\end{align*}
\begin{align*}
			&\int_{\frac{|x|}{2} \le|y| \le 2|x|}
			e^{-\frac{|x-y|^2}{4t} }
			\langle y\rangle^{-b}(\log \langle y \rangle)^{-a}  d y\\
            &\lesssim
			\langle x\rangle^{-b}(\log \langle x \rangle)^{-a}
			\int_{ |x-y| \le 3|x|}
			e^{-\frac{|x-y|^2}{4t} }
			d y
			\sim
			\begin{cases}
			 \langle x\rangle^{-b}(\log \langle x \rangle)^{-a} 	|x|^{n}
				&
				\mbox{ \ if \ } |x|\le t^{\frac{1}{2}}
				\\
			\langle x\rangle^{-b}(\log \langle x \rangle)^{-a}
			t^{\frac{n}{2}}
				&
				\mbox{ \ if \ } |x| > t^{\frac{1}{2}}
			\end{cases},
\end{align*}
and
\begin{equation*}
\int_{2|x| \le |y|}
e^{-\frac{|x-y|^2}{4t} }
\langle y\rangle^{-b} (\log \langle y \rangle)^{-a}d y
\le
\int_{2|x| \le |y|}
e^{-\frac{|y|^2}{16t} }
\langle y\rangle^{-b}(\log \langle y \rangle)^{-a} d y .
\end{equation*}
For $|x|\ge 1$, we have
\begin{align*}
&\int_{2|x| \le |y|}
			e^{-\frac{|y|^2}{16t} }
			\langle y\rangle^{-b}(\log \langle y \rangle)^{-a} d y
			\lesssim\\
			  &
			\begin{cases}
				\int_{2|x| \le |y|\leq 2\sqrt{t}}
			e^{-\frac{|y|^2}{16t} }
			\langle y\rangle^{-b}(\log \langle y \rangle)^{-a} d y+\int_{2\sqrt{t} \le |y|}
			e^{-\frac{|y|^2}{16t} }
			\langle y\rangle^{-b}(\log \langle y \rangle)^{-a} d y\lesssim t^{\frac{n-b}{2}}(\log \langle t \rangle)^{-a}
				&
				\mbox{ \ if \ }
				|x|\le t^{\frac 12},
				\\
				t^{\frac{n-b}{2}}(\log \langle |x| \rangle)^{-a}		
			    \int_{\frac{|x|^2}{4t}}^\infty
			    e^{-z} z^{\frac{n-b}{2}-1} d z\lesssim t^{\frac{n-b}{2}}(\log \langle |x|\rangle)^{-a}
				e^{-\frac{|x|^2}{8t}}
				&
				\mbox{ \ if \ }
				|x| > t^{ \frac 12 }
			\end{cases} .
	\end{align*}
For $|x|<1$, we have
\begin{equation*}
	\begin{aligned}
		&
		\int_{2|x| \le |y|}
		e^{-\frac{|y|^2}{16t} }
		\langle y\rangle^{-b}(\log \langle y \rangle)^{-a} d y
		\sim
\int_{2|x|}^{2}
	e^{-\frac{r^2}{16t} } r^{n-1} dr
+
		\int_{2}^\infty
		e^{-\frac{ r^2}{16t} }
		r^{n-1-b}(\log r)^{-a} d r
\\
& \lesssim
\begin{cases}
	t^{\frac n2} e^{ -\frac{|x|^2}{8t} }
& \mbox{ \ if \ } t\le |x|^2
\\
	t^{\frac n2}
& \mbox{ \ if \ } |x|^2<t\le 1
\\
1 & \mbox{ \ if \ } t >1
\end{cases}
+
\begin{cases}
t^{\frac{n-b}{2}}
e^{-\frac{ 1 }{8t}}\lesssim t^{\frac{n-b}{2}}
e^{-\frac{ |x|^2 }{8t}}
&
\mbox{ \ if \ }
t<1
\\
	t^{\frac{n-b}{2}}(\log(t+2))^{-a}
	&
	\mbox{ \ if \ }
 t\ge 1
\end{cases} \\
&\lesssim
\begin{cases}
t^{\frac n2} e^{ -\frac{|x|^2}{16 t} }
& \mbox{ \ if \ } t\le |x|^2
\\
	t^{\frac n2}
& \mbox{ \ if \ } |x|^2<t\le 1
	\\
	t^{\frac{n-b}{2}}(\log(t+2))^{-a}
	&
	\mbox{ \ if \ }
	t\ge 1
\end{cases} .
	\end{aligned}
\end{equation*}
Combining above estimates, one has
\begin{equation*}
	\begin{aligned}
		|\Theta(r,t)|
		\lesssim \ &
		\begin{cases}
			\langle t \rangle^{-\frac{b}{2}}(\log(t+2))^{-a}	
			&
			\mbox{ \ if \ }
			|x|\le \max\{1,t^{\frac 12} \},
			\\
			\langle x\rangle^{-b}(\ln (|x|+2))^{-a}  
			&
			\mbox{ \ if \ }
			|x| > \max\{1,t^{\frac 12} \}.
		\end{cases}
	\end{aligned}
\end{equation*}
For the gradient estimate (\ref{Estimate-for-Theta-Gradient}), we have 
\begin{align*}
&\left|\nabla_r \Theta(r,t)\right| \\
& \lesssim(4\pi t)^{-\frac n2} \left|\int_{\RR^n}\left(-\frac{x-y}{2t}\right)e^{-\frac{|x-y|^2}{4t} }\langle y \rangle^{-b}(\log \langle y \rangle)^{-a}d y\right|\\
& \lesssim(4\pi t)^{-\frac n2} \left|-\frac{x}{2t}\right|\int_{\RR^n}e^{-\frac{|x-y|^2}{4t} }\langle y \rangle^{-b}(\log \langle y \rangle)^{-a}d y+ (4\pi t)^{-\frac n2} \left(\frac{1}{2t}\right)\int_{\RR^n}|y|e^{-\frac{|x-y|^2}{4t} }\langle y \rangle^{-b}(\log \langle y \rangle)^{-a}d y\\
& = \left|-\frac{x}{2t}\right| \left|\Theta(x, t)\right| + (4\pi t)^{-\frac n2} \left(\frac{1}{2t}\right)\int_{\RR^n}|y|e^{-\frac{|x-y|^2}{4t} }\langle y \rangle^{-b}(\log \langle y \rangle)^{-a}d y\\
& \lesssim
\begin{cases}
\langle t \rangle^{-\frac{b}{2}}(\log(t+2))^{-a}\left(\frac{1}{\sqrt{t}} + \frac{|x|}{t}\right)
&
\mbox{ \ if \ }
|x|\le \max\{1,t^{\frac 12} \},
\\
\langle x\rangle^{-b}(\ln (|x|+2))^{-a}\frac{|x|}{t}
&
\mbox{ \ if \ }
|x| > \max\{1,t^{\frac 12} \}.
\end{cases}
\end{align*}
This completes the proof.
\end{proof}

\medskip

\section{Radial heat kernel}\label{radial-heat-kernel}
Let us consider the following equation
\begin{align*}
u_t = u_{rr} + \frac{n-1}{r}u_r,
\end{align*}
set $u = r^{\frac{2-n}{2}}v(r, t)$, then we get 
\begin{align*}
v_t = v_{rr} + \frac{1}{r}v_r + \frac{\alpha(\alpha+n-2)}{r^2}v,
\end{align*}
with $\alpha = \frac{2-n}{2}$, which is
\begin{align*}
v_t = v_{rr} + \frac{1}{r}v_r - \left(\frac{n-2}{2}
\right)^2\frac{v}{r^2}.
\end{align*}
Set $\nu = \frac{n-2}{2}$ and recall the Hankel transform $\hat v (\rho, t) = \int_{0}^\infty v(r, t)J_{\nu}(\rho r)rdr$ with $J_\nu$ being the Bessel function, then we have 
\begin{align*}
\partial_t \hat v(\rho, t) &= \int_{0}^\infty\left[v'' + \frac{1}{r}v' - \left(\frac{n-2}{2}
\right)^2\frac{v}{r^2}\right]J_{\nu}(\rho r)dr\\
& = \int_{0}^\infty v\left[\rho^2rJ_\nu'' + \rho J_
\nu' - \left(\frac{n-2}{2}
\right)^2\frac{J_\nu}{r}\right]dr\\
& = \int_{0}^\infty v\left[-\rho^2rJ_\nu(\rho r)\right]dr.
\end{align*}
Hence we have 
\begin{align*}
\partial_t \hat v(\rho, t) & = -\rho^2\hat v(\rho, t).
\end{align*}
Here we have used the formula for the Bessel function,
\begin{align*}
(\rho r)J_\nu'' + \rho r J_\nu' + \left[(\rho r)^2-\nu^2\right] = 0.
\end{align*}

For the initial data, 
\begin{align*}
u(r, 0) = u_0(r) = r^{\frac{2-n}{2}}v(r, 0),
\end{align*}
then we have 
\begin{align*}
\hat v(\rho, 0) = \widehat{r^{\frac{n-2}{2}}u_0} = \int_{0}^\infty s^{\frac{n-2}{2}}u_0(s)J_{\nu}(\rho s)sds,
\end{align*}
and 
\begin{align*}
\hat v(\rho, t) = \widehat{r^{\frac{n-2}{2}}u_0} e^{-\rho^2 t}.
\end{align*}

Now from Hankel’s repeated integral (which holds for $\nu \geq -\frac{1}{2}$ and $\int_{0}^\infty |v(\rho, t)|d\rho < +\infty$, see formulas (10.22.76) and (10.22.77) in \cite{NIST:DLMF}), we have 
\begin{align*}
v(t, t) & = \int_{0}^\infty \hat v(\rho, t)J_\nu(\rho r) \rho d\rho \\
& =  \int_{0}^\infty \left(\int_{0}^\infty s^{\frac{n-2}{2}}u_0(s)J_{\nu}(\rho s)sds\right)e^{-\rho^2 t}J_\nu(\rho r) \rho d\rho \\
& = \int_{0}^\infty s^{\frac{n-2}{2}}u_0(s)sds\int_{0}^\infty e^{-\rho^2 t}J_{\nu}(\rho s)J_{\nu}(\rho r)\rho d\rho\\
& = \int_{0}^\infty s^{\frac{n}{2}}u_0(s)\frac{1}{2t}e^{-\frac{r^2+s^2}{4t}}I_{\frac{n-2}{2}}\left(\frac{rs}{2t}\right)ds.
\end{align*}
Therefore, we have 
\begin{align*}
u(r, t) = \int_{0}^\infty \Gamma_{n}(r, s; t)u_{0}(s)ds
\end{align*}
with 
\begin{align*}
\Gamma_{n}(r, s; t) = r^{\frac{2-n}{2}}\frac{s^{\frac{n}{2}}}{2t}e^{-\frac{r^2+s^2}{4t}}I_{\frac{n-2}{2}}\left(\frac{rs}{2t}\right).
\end{align*}
In the above, we have used the following formula,
\begin{align*}
\int_{0}^\infty e^{-\rho^2 t}J_{\frac{n-2}{2}}(\rho s)J_{\frac{n-2}{2}}(\rho r)\rho d\rho = \frac{1}{2t} e^{-\frac{r^2+s^2}{4t}}I_{\frac{n-2}{2}}\left(\frac{rs}{2t}\right).
\end{align*}
Here $I_{\frac{n-2}{2}}$ is the modified Bessel function of the first kind. Formula (10.22.67) in \cite{NIST:DLMF}, see also formula (6.633.2) in page 707 of \cite{Gradshteyn:1702455}.

\bigskip

\section*{Acknowledgements}
Y. Sire is partially supported by the NSF DMS Grant 2154219, ``Regularity vs singularity formation in elliptic and parabolic equations''. J.C. Wei is supported by National Key R\&D Program of China 2022YFA1005602, and Hong Kong General Research Fund “On critical and supercritical Fujita equation". Y. Zheng is supported by NSF of China (No. 12171355). Y. Zhou is supported in part by the Fundamental Research Funds for the Central Universities.

\bigskip


\bibliographystyle{plain}

\end{document}